\newtheorem{theorem}{Theorem}[section]
\newtheorem{lemma}[theorem]{Lemma}
\newtheorem{proposition}[theorem]{Proposition}
\newtheorem{corollary}[theorem]{Corollary}
\newcommand{\id}{\mathrm{id}}
\newcommand{\supp}{\operatorname{supp}}
\newcommand{\Alt}{\operatorname{Alt}}
\newcommand{\Sym}{\operatorname{Sym}}
\newcommand{\FM}{\operatorname{FM}}
\newenvironment{proof}{\par\noindent{\bf Proof.}}{$\qed$\par\bigskip}
\newcommand{\qed}{\enspace\vrule  height6pt  width4pt  depth2pt}
\begin{document}

\title{Finitely Presented Monoids and Algebras defined by Permutation Relations of Abelian Type }

\author{ Ferran Ced\'{o}\footnote{
 Research partially supported by grants of MICIIN-FEDER (Spain)
MTM2008-06201-C02-01, Generalitat de Catalunya
2009 SGR 1389. }  \and Eric
Jespers\footnote{Research supported in part by
Onderzoeksraad of Vrije Universiteit Brussel and
Fonds voor Wetenschappelijk Onderzoek (Belgium).}
\and Georg Klein}
\date{}
\maketitle

\begin{abstract}
The class of finitely presented algebras over a
field $K$ with a set of generators $a_{1},\ldots
, a_{n}$ and defined by homogeneous relations of
the form
 $a_{1}a_{2}\cdots a_{n} =a_{\sigma (1)} a_{\sigma (2)} \cdots
 a_{\sigma (n)}$,
where $\sigma$ runs through an abelian subgroup $H$ of $\Sym_{n}$,
the symmetric group, is considered. It is proved that the Jacobson
radical of such algebras is zero. Also it is characterized when the
monoid $S_n(H)$, with the ``same'' presentation as the algebra, is
cancellative in terms of the stabilizer of $1$ and the stabilizer of
$n$ in $H$.  This work is a continuation of earlier work of Ced\'{o},
Jespers and Okni\'{n}ski.
\end{abstract}

\noindent {\it Keywords:} semigroup ring,
finitely presented, semigroup, Jacobson radical,
semiprimitive,  primitive. \\ {\it Mathematics
Subject Classification:} Primary 16S15, 16S36,
20M05; Secondary  20M25, 16N20.

\section{Introduction}\
In recent literature a lot of attention is given
to concrete classes of finitely presented
algebras $A$ over a field $K$ defined by
homogeneous semigroup relations, that is,
relations of the form $w=v$, where $w$ and $v$
are words of the same length in a generating set
of the algebra. In
\cite{alghomshort,altalgebra,alt4algebra} the
study of the following finitely presented
algebras over a field $K$ is initiated:
$$A=K\langle a_1,a_2,\dots ,a_n\mid a_1a_2\cdots
a_n= a_{\sigma (1)}a_{\sigma (2)}\cdots a_{\sigma (n)},\; \sigma\in
H\rangle , $$ where $H$ is a subset of the symmetric group $\Sym_n$
of degree $n$. Note that $A$ is the semigroup algebra $K[S_n(H)]$,
where $$S_n(H)=\langle a_1,a_2,\dots ,a_n\mid a_1a_2\cdots a_n=
a_{\sigma (1)}a_{\sigma (2)}\cdots a_{\sigma (n)},\; \sigma\in
H\rangle $$ is the monoid with the ``same'' presentation as the
algebra. In \cite{alghomshort}, the case being treated is that of
the cyclic  subgroup $H$ of $\Sym_n$ generated by $ \sigma = (1,2,
\ldots ,n)$. In \cite{altalgebra}  one deals with $H=\Alt_{n}$, the
alternating group, and in \cite{alt4algebra} the special and more
complicated case of $\Alt_{4}$ is handled. There are noteworthy
differences between these cases. In particular, the Jacobson radical
$J(K[S_n(\Alt_n)])$ of $K[S_{n}(\Alt_{n})]$ is zero only if $n$ is
even and $K$ has characteristic different from 2, and otherwise the
radical has been described, while $J(K[S_n(\langle \sigma
\rangle)])$ is always zero.  The latter is  a consequence of the
fact that $S_n(\langle \sigma \rangle)$ has a group of fractions
$G=S_n \langle a_1 \cdots a_n \rangle ^{-1} \cong F \times C $,
where $F=gr(a_1, \ldots ,a_{n-1})$ is a free group of rank $n-1$ and
$C=gr(a_1 \cdots a_n)$ is a cyclic infinite group.

Starting from the properties considered  in the
above mentioned papers, the aim  of this paper is
to investigate the properties of the algebra $
K[S_n(H)] $ for any abelian subgroup $H$ of
$\Sym_n$. In particular, we prove that
$J(K[S_n(H)])$ is always zero and we give
infinitely many examples of primitive ideals of
$K[S_n(H)]$ for $n\geq 3$ and $H$ abelian
subgroup of $\Sym_n$ such that
$(1,2,\dots ,n)\notin H$. Also we show that
$S_n(H)$ is a cancellative monoid if and only if
the stabilizers of $1$ and of $n$ in $H$ are
trivial subgroups of $H$.

\section{Preliminary results about $S_n(H)$}

The following two results display technical properties of
$S_n(H)$ which will be crucial for our investigations of $S_{n}(H)$
and $K[S_n(H)]$.

 Let $H$ be a subset of $\Sym_n$  and
$S=S_n(H)=\langle a_1, \ldots ,a_n \mid a_{1}\cdots a_{n}
=a_{\sigma(1)} \cdots a_{\sigma(n)},\; \sigma \in H \rangle$. We
denote by $z$ the element $z=a_1a_2\cdots a_n \in S$. Let $
\mathrm{FM}_n= \langle x_1, \ldots , x_n \rangle $ be the free
monoid of rank n and let $ \pi: \mathrm{FM}_n \rightarrow S $
be the unique morphism such that $ \pi(x_i) = a_i $ for all $ i = 1,
\ldots ,n $.

Let $w=x_{i_1}x_{i_2}\cdots x_{i_m}$ be a nontrivial word in the
free monoid $\mathrm{FM}_n$. Let $1\leq p,q\leq m$ and $r,s$ be
nonnegative integers such that $p+r,q+s\leq m$. We say that the
subwords $w_1=x_{i_p}x_{i_{p+1}}\cdots x_{i_{p+r}}$ and
$w_2=x_{i_q}x_{i_{q+1}}\cdots x_{i_{q+s}}$ overlap in $w$ if either
$p\leq q\leq p+r$ or $q\leq p\leq q+s$. For example, in the word
$x_2x_2x_3x_1x_4$ the subwords $x_2x_3x_1$ and $x_3x_1x_4$ overlap
and the subwords $x_2x_2$ and $x_3x_1$ do not overlap. If $p\leq
q\leq p+r\leq q+s$, then we say that the length of the overlap
between the subwords $w_1$ and $w_2$ is $p+r-q+1$. If $p\leq q\leq
q+s\leq p+r$, then we say that the length of the overlap between the
subwords $w_1$ and $w_2$ is $s+1$. For example, the length of the
overlap between the subwords $x_2x_3x_1$ and $x_3x_1x_4$ in
$x_2x_2x_3x_1x_4$ is $2$.

We denote by $|w|$ the length of the word $w\in \mathrm{FM}_n$.

 For, $1\leq i \leq n$  and $H$ any subgroup
of $\Sym_{n}$, we denote by $H_{i}$ the stabilizer of $i$ in $H$.
Thus $H_{i}=\{ \sigma \in H \mid \sigma (i)=i\}$.  The identity map
in $\Sym_{n}$ we denote by $\id$.

\begin{lemma}\label{L1}
Let H be an abelian subgroup of $\Sym_n$. Let
$w_1,w_2, w_{1,1}$, $w_{1,2},\; w_{1,3}$,
$w_{1,1}',\;  w_{1,2}',\; w_{1,3}',\;  w_{2,2},\;
w_{2,1}',\;  w_{2,2}',\; w_{2,3}'\in
\mathrm{FM}_n$  be such that

\begin{equation} \label{xeq1}
\begin{split}
& w_1 = w_{1,1}w_{1,2}w_{1,3}= w_{1,1}'w_{1,2}'w_{1,3}',\\
& w_2 = w_{1,1}'w_{2,2}w_{1,3}'=w_{2,1}'w_{2,2}'w_{2,3}'
\end{split}
\end{equation}
and
$\pi(w_{1,2})=\pi(w_{1,2}')=\pi(w_{2,2})=\pi(w_{2,2}')=
z$.
\begin{itemize}
\item[(i)]
If $w_{1,2}$ and $w_{1,2}'$ overlap in $w_1$, $w_{2,2}$ and
$w_{2,2}'$ overlap in $w_2$, $|w_{1,3}|,|w_{2,3}'|<|w_{1,3}'|$ and
$H_{n}=\{ \id\}$, then $w_{1}=w_{2}$.
\item[(ii)]
If $w_{1,2}$ and $w_{1,2}'$ overlap in $w_1$, $w_{2,2}$ and
$w_{2,2}'$ overlap in $w_2$, $|w_{1,1}|,|w_{2,1}'|<|w_{1,1}'|$ and
$H_{1}=\{ \id\}$, then $w_{1}=w_{2}$.
\end{itemize}

\end{lemma}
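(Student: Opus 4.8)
The plan is to reduce the desired equality $w_1=w_2$ to the coincidence of two permutations in $H$, and then to extract that coincidence from the two overlaps together with the shared suffix $w_{1,3}'$, using that $H$ is abelian and that $H_n=\{\id\}$.

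First I would record the elementary fact that a word of $\FM_n$ lies in $\pi^{-1}(z)$ exactly when it has the form $x_{\tau(1)}x_{\tau(2)}\cdots x_{\tau(n)}$ for a \emph{unique} $\tau\in H$: since every defining relation is length preserving with both sides of length $n$, any rewriting of a length-$n$ word replaces the whole word, so the congruence class of $x_1\cdots x_n$ among words of length $n$ is precisely $\{x_{\tau(1)}\cdots x_{\tau(n)}:\tau\in H\}$. Accordingly I write $w_{1,2}',w_{1,2},w_{2,2},w_{2,2}'$ as the $\pi$-preimages of $z$ determined by $\sigma,\rho,\mu,\nu\in H$. Now $w_1=w_{1,1}'w_{1,2}'w_{1,3}'$ and $w_2=w_{1,1}'w_{2,2}w_{1,3}'$ share the prefix $w_{1,1}'$ and the suffix $w_{1,3}'$ and have middle factors of equal length $n$, so they agree off that middle block; hence $w_1=w_2$ iff $w_{1,2}'=w_{2,2}$, i.e.\ $\sigma=\mu$. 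Since $H_n=\{\id\}$ makes $h\mapsto h(n)$ injective on $H$, it even suffices to prove $\sigma(n)=\mu(n)$.

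From the overlap of $w_{1,2}'$ (the left factor, since $|w_{1,3}|<|w_{1,3}'|$) and $w_{1,2}$ (the right factor) I read off, for the resulting shift $d\in\{1,\dots,n-1\}$, that $\rho(j)=\sigma(j+d)$ for $1\le j\le n-d$; equivalently $\gamma:=\sigma^{-1}\rho\in H$ acts as $j\mapsto j+d$ on $\{1,\dots,n-d\}$. Symmetrically, the overlap in $w_2$ (with $|w_{2,3}'|<|w_{1,3}'|$ marking $w_{2,2}'$ as the right factor) gives a shift $e\in\{1,\dots,n-1\}$ and $\delta:=\mu^{-1}\nu\in H$ acting as $j\mapsto j+e$ on $\{1,\dots,n-e\}$. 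The third ingredient is that the last $d$ letters of $w_{1,2}$ and the last $e$ letters of $w_{2,2}'$ both protrude past position $|w_{1,1}'|+n$ into the common suffix $w_{1,3}'$, so comparing its first $\min(d,e)$ letters yields $\rho(n-d+i)=\nu(n-e+i)$ for $1\le i\le\min(d,e)$. Putting $\theta:=\sigma^{-1}\mu$ and $\psi:=\nu^{-1}\rho$, commutativity of $H$ rewrites $\psi=\theta^{-1}\gamma\delta^{-1}$, so these matchings become
\[
\gamma\delta^{-1}(n-d+i)=\theta(n-e+i),\qquad 1\le i\le\min(d,e).
\]

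The decisive computation is to evaluate the left side at $i=m:=\min(d,e)$: applying the explicit shifts of $\delta^{-1}$ and then $\gamma$ (the index bounds needed are guaranteed by $d,e\le n-1$) gives $\gamma\delta^{-1}(n-d+m)=n-e+m$, so $\theta$ fixes the point $p:=n-e+m$. Here I expect the main obstacle: when $e\le d$ one has $p=n$ and we finish at once, but when $e>d$ the fixed point $p=n-e+d$ is not $n$. I would resolve this with an abelian orbit argument: since $\theta$ commutes with every element of $H$, it fixes the whole $H$-orbit of $p$, and for $e>d$ one checks $p=\gamma\delta^{-1}(n)$, so $p$ lies in the $H$-orbit of $n$; therefore $\theta(n)=n$ in all cases. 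With $H_n=\{\id\}$ this forces $\theta=\id$, i.e.\ $\sigma=\mu$, whence $w_1=w_2$. Finally, part (ii) is the mirror image of (i) under reversal of words, which interchanges the roles of the first and last coordinates, so the identical argument with $H_1=\{\id\}$ in place of $H_n=\{\id\}$ yields the conclusion.
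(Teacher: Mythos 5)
Your proof is correct and follows essentially the same route as the paper's: both reduce $w_1=w_2$ to the equality of the permutations underlying $w_{1,2}'$ and $w_{2,2}$, extract from the two overlaps together with the shared suffix $w_{1,3}'$ a point fixed by the quotient $\theta=\sigma^{-1}\mu$, and use commutativity of $H$ to transfer that fixed point to $n$, where $H_n=\{\id\}$ forces $\theta=\id$. Your shift permutations $\gamma,\delta$ and the orbit argument are merely a cleaner packaging of the paper's explicit index computations and its conjugation step (the paper's cases $i\le j$ and $j<i$ correspond to your cases $e\le d$ and $d<e$), and part (ii) is handled by the same reversal/opposite-monoid symmetry.
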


\begin{proof}
  $(i)$ Since $w_{1,2}$ and $w_{1,2}'$
overlap in $w_1$ and $|w_{1,3}|<|w_{1,3}'|$, we
have that $0<|w_{1,3}'|-|w_{1,3}|<n$. Since
$w_{2,2}$ and $w_{2,2}'$ overlap in $w_2$ and
$|w_{2,3}'|<|w_{1,3}'|$, we have that
$0<|w_{1,3}'|-|w_{2,3}'|<n$. Thus, there exist
$u,v\in \mathrm{FM}_n$ such  that
\begin{eqnarray} \label{first*}
w_{1,2}'w_{1,3}'=uw_{1,2}w_{1,3}&\quad\mbox{and}\quad
& w_{2,2}w_{1,3}'=vw_{2,2}'w_{2,3}'.
\end{eqnarray}

Suppose that the length of the overlap between
$w_{1,2}$ and $w_{1,2}'$ in $w_1$ is $i$. Then
there exist $\sigma_1, \sigma_2 \in H$ such that
$$ uw_{1,2} = x_{\sigma_1(1)} \cdots
x_{\sigma_1(n)} x_{\sigma_2(i+1)} \cdots x_{\sigma_2(n)}, $$
with
\begin{eqnarray}\label{first**}
\sigma_1(n-i+1)= \sigma_2(1),\;  \sigma_1(n-i+2)= \sigma_2(2),\;
\dots, \; \sigma_1(n)=\sigma_2(i), \end{eqnarray}
 and $
x_{\sigma_2(i+1)} \cdots x_{\sigma_2(n)}w_{1,3}=w_{1,3}'$.  Note
that $|u|=n-i$.

Suppose that the length of the overlap between
$w_{2,2}$ and $w_{2,2}'$ in $w_2$ is $j$. Then
there exist $\tau_1, \tau_2 \in H$ such that $$
vw_{2,2}' = x_{\tau_1(1)} \cdots x_{\tau_1(n)}
x_{\tau_2(j+1)} \cdots x_{\tau_2(n)}, $$
with
\begin{eqnarray} \label{second*}
\tau_1(n-j+1)= \tau_2(1),\;  \tau_1(n-j+2)=
\tau_2(2),\; \dots, \tau_1(n)=\tau_2(j),
\end{eqnarray}
 and $ x_{\tau_2(j+1)} \cdots
x_{\tau_2(n)}w_{2,3}'=w_{1,3}'$.

 From  (\ref{first*}) we obtain that
$|u|+|w_{1,2}|+|w_{1,3}|=|w_{1,2}'|+ |
w_{1,3}'|$. Hence, $|u|=|w_{1,3}'|-|w_{1,3}|$ and
therefore $0<i=n-(|w_{1,3}'|-|w_{1,3}|)<n$.
Similarly, $0<j=n-(|w_{1,3}'|-|w_{2,3}'|)<n$.

Suppose that $1 \leq i \leq j < n$.  In this
case, we have from (\ref{second*}) that
$\tau_2(i)=\tau_1(n-j+i)$ and, since $\sigma_2(i)
= \sigma_1(n)$, we get $$ i = \sigma_2^{-1}
\sigma_1(n)= \tau_2^{-1} \tau_1(n-j+i). $$ Since
$H$ is abelian, we  thus obtain
$$\tau_2(n)=\tau_1 \sigma_1^{-1}\sigma_2
(n-j+i).$$  As  $w_{1,3}'=x_{\sigma_2(i+1)}
\cdots x_{\sigma_2(n)}w_{1,3}=x_{\tau_2(j+1)}
\cdots x_{\tau_2(n)}w_{2,3}' $ and $i\leq j$, we
have that $\tau_{2}(n) =\sigma_2(n-j+i)$. Hence
$$\tau_2(n)=\tau_1 \sigma_1^{-1} \tau_2(n).$$
 Because, by assumption,  $H_{n}=\{
\id\}$, we get that $\tau_{2}=\tau_{1}\sigma_{1}^{-1}\tau_{2}$ and
thus $ \tau_1 = \sigma_1$. Therefore
\begin{eqnarray*}
w_1&=&w_{1,1}'w_{1,2}'w_{1,3}'=w_{1,1}'x_{\sigma_1(1)} \cdots x_{\sigma_1(n)}w_{1,3}'\\
&=&w_{1,1}'x_{\tau_1(1)} \cdots x_{\tau_1(n)}w_{1,3}'=w_{1,1}'w_{2,2}w_{1,3}'\\
&=&w_2.
\end{eqnarray*}

Suppose now that $1 \leq j < i < n$. In this case,  from
(\ref{first**}) we have $\sigma_2(j)=\sigma_1(n-i+j)$ and, since
$\tau_2(j) = \tau_1(n)$, we get $$ j = \sigma_2^{-1}
\sigma_1(n-i+j)= \tau_2^{-1} \tau_1(n). $$  As $H$ is abelian,
we have
$$\sigma_2(n)=\sigma_1 \tau_1^{-1}\tau_2
(n-i+j).$$  Because
$w_{1,3}'=x_{\sigma_2(i+1)} \cdots
x_{\sigma_2(n)}w_{1,3}=x_{\tau_2(j+1)} \cdots
x_{\tau_2(n)}w_{2,3}' $ and $i> j$, we have that
$\sigma_{2}(n) =\tau_2(n-i+j)$. Hence
$$\sigma_2(n)=\sigma_1 \tau_1^{-1}
\sigma_2(n).$$  Since, by assumption, $H_{n}=\{ \id\}$, we obtain
that $ \tau_1 = \sigma_1$. Thus, also in this case, $w_1=w_2$.
Therefore part $(i)$ follows.

 Part $(ii)$ of the lemma follows by symmetry. Or alternatively,
the opposite monoid $S^{opp}$ is a monoid of the same type as $S$,
where we replace the element $z=a_{1}\cdots a_{n}$ by the element
$a_{n}\cdots a_{1}$ . Hence, if $H_{1}=\{ \id\}$ then  $(i)$ holds
for $S^{opp}$ and thus $(ii)$ holds for $S$. Recall that as a set
$S^{opp}$ is $S$ but multiplication $\cdot$ in $S^{opp}$ is defined
by $s_{1}\cdot s_{2}=s_{2}s_{1}$, where the latter is the product in
$S$.
\end{proof}

 Let \begin{eqnarray} \label{third*}
 A =
\{x_{\sigma(n-1)}x_{\sigma(n)} \mid \sigma \in H
\} &\quad  \mbox{and} \quad &  \tilde{A} =
\{x_{\sigma(1)}x_{\sigma(2)} \mid \sigma \in H
\}.
\end{eqnarray}

\begin{lemma}\label{L2}
 Let $H$ be an abelian  subgroup of $\Sym_n$
such that $(1,2,\dots ,n)\not\in H$.
\begin{itemize}
\item[(i)] If $H_{n}=\{ \id\}$ and if
 $w\in \mathrm{FM}_n$ is
such that $\pi(wx_ix_j)\in Sz$ for some $1 \leq
i,j \leq n$, then $x_ix_j \in A$.
\item[(ii)] If $H_{1}=\{ \id\}$ and if
$v\in \mathrm{FM}_n$ is such that
$\pi(x_ix_jv)\in zS$ for some $1 \leq i,j \leq
n$, then $x_ix_j \in \tilde{A}$.
\end{itemize}

\end{lemma}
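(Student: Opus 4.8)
The plan is to analyze how the right end of a word is affected by the defining relations. Write $z'=x_1\cdots x_n$ and call any word $x_{\sigma(1)}\cdots x_{\sigma(n)}$ with $\sigma\in H$ a \emph{$z$-block}; since $H$ is a group, every $z$-block has image $z$ under $\pi$, and two words of $\mathrm{FM}_n$ have the same image precisely when one is obtained from the other by a finite chain of elementary moves, each replacing one $z$-block subword by another $z$-block at the same position. As the relations are homogeneous, $\pi$ preserves length, so $\pi(wx_ix_j)\in Sz$ means exactly that $wx_ix_j$ can be transformed by such a chain into a word $V$ whose last $n$ letters form a $z$-block $x_{\rho(1)}\cdots x_{\rho(n)}$. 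I fix such a chain $W_0=wx_ix_j\to\cdots\to W_k=V$ of minimal length and set $m=|wx_ix_j|$. If $k=0$ then $x_ix_j$ is already the suffix $x_{\rho(n-1)}x_{\rho(n)}$ of a $z$-block and lies in $A$, so assume $k\ge 1$.

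First I would track the last letter (position $m$). An elementary move alters position $m$ only if it rewrites the terminal block occupying positions $m-n+1,\dots,m$, which is possible only once the word already ends in a $z$-block. Hence, up to the first index $t^\ast$ at which $W_{t^\ast}$ ends in a $z$-block, position $m$ stays equal to $x_j$; consequently $\rho(n)=j$. Because $H_n=\{\id\}$, there is a \emph{unique} $\rho\in H$ with $\rho(n)=j$, so $A$ contains exactly one pair ending in $x_j$, namely $x_{\rho(n-1)}x_j$. It therefore remains to prove that the penultimate letter of $wx_ix_j$ equals $\rho(n-1)$, i.e.\ that $i=\rho(n-1)$.

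Position $m-1$ is changed by an elementary move only when that move rewrites the block at positions $m-n,\dots,m-1$ or the terminal block, and before $t^\ast$ only the former can occur. Here the hypothesis $(1,2,\dots,n)\notin H$ enters: if a block $x_{\mu(1)}\cdots x_{\mu(n)}$ at positions $m-n,\dots,m-1$ and a block $x_{\rho(1)}\cdots x_{\rho(n)}$ at positions $m-n+1,\dots,m$ were simultaneously present, then $\mu(s+1)=\rho(s)$ for $1\le s\le n-1$, forcing $\mu^{-1}\rho=(1,2,\dots,n)\in H$, a contradiction. Thus the move creating the terminal block is neither the one at positions $m-n,\dots,m-1$ nor (by choice of $t^\ast$) the terminal one, so it lies strictly to the left and leaves positions $m-1,m$ untouched; in particular $W_{t^\ast-1}$ already ends in $x_{\rho(n-1)}x_j$. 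The \emph{main obstacle} is that position $m-1$ is not outright invariant: an earlier move at positions $m-n,\dots,m-1$ could set it to $\rho(n-1)$ and let subsequent moves on the left complete the $z$-block, so a naive invariant does not close the argument. This is exactly what Lemma~\ref{L1} is designed to handle. The two overlapping $z$-block representations arising near the right ends of $W_{t^\ast-1}$ and $W_{t^\ast}$ meet the size hypotheses of Lemma~\ref{L1}(i) (the right tails cut off are shorter than the one determined by the terminal block, so all overlap lengths lie strictly between $0$ and $n$), and with $H_n=\{\id\}$ the lemma forces the relevant words to coincide. I would feed this rigidity into an induction on $k$, showing that every move reaching positions $m-1,m$ does not in fact change the word, so that the suffix $x_ix_j$ is carried unchanged along the derivation and must already equal $x_{\rho(n-1)}x_{\rho(n)}$; hence $x_ix_j\in A$.

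Part (ii) I would obtain from (i) by the opposite-monoid symmetry already used for Lemma~\ref{L1}: $S^{opp}$ is a monoid of the same type, with the element $a_1\cdots a_n$ replaced by $a_n\cdots a_1$, with $\tilde A$ playing the role of $A$ and $zS$ the role of $Sz$, and with $H_1$ in the role of $H_n$. Applying (i) in $S^{opp}$ then yields (ii) in $S$.
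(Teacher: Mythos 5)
Your setup is the same as the paper's and your first steps are correct: you realize $\pi(wx_ix_j)\in Sz$ through a minimal chain of elementary rewrites, you observe that the last letter cannot change before the word first ends in a $z$-block (so that block's permutation $\rho$ satisfies $\rho(n)=j$, and $H_n=\{\id\}$ makes it unique), and your observation that the move creating that terminal block cannot sit at positions $m-n,\dots,m-1$ --- because two $z$-blocks offset by one position would force $(1,2,\dots,n)\in H$ --- is a correct and clean way to see that this final move leaves positions $m-1,m$ alone. But the proof stops exactly where the real work begins, and you say so yourself: position $m-1$ is not invariant, because an earlier move may rewrite a block at positions $m-n,\dots,m-1$. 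What you must show is that in a minimal chain no such move ever occurs, and for this you offer only the assertion that the relevant words ``meet the size hypotheses of Lemma~\ref{L1}(i)'' together with an unspecified ``induction on $k$''.

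That assertion is the gap. Lemma~\ref{L1} applies only when two consecutive words each carry two \emph{overlapping} $z$-block decompositions satisfying the tail inequalities $|w_{1,3}|,|w_{2,3}'|<|w_{1,3}'|$; in an arbitrary minimal chain, consecutive rewrite sites need not overlap at all --- the tail lengths can jump by $n$ or more, or move back and forth --- and then Lemma~\ref{L1} says nothing. Ruling out these configurations is precisely the content of the paper's Claim ($0<|w_{j,3}'|-|w_{j-1,3}'|<n$), whose proof is the bulk of the argument: when a tail length drops by at least $n$, the paper re-decomposes an earlier word so as to exhibit $w_{r,2}'w_{r,3}'$ as a suffix (the choice of $l$ and display (\ref{xeq10a})), which creates a pair to which Lemma~\ref{L1} does apply; and when a tail length grows by at least $n$, Lemma~\ref{L1} is of no use and the paper instead performs surgery on the chain, factoring every earlier prefix as $w_{k,1}'=w_{r,1}'w_{r,2}'v_k'$ and splicing the late rewrite to the front to build the strictly shorter sequence (\ref{xeq14}). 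Your sketch contains no counterpart of either step, so the induction you appeal to has no engine. (A further small slip: ``every move reaching positions $m-1,m$ does not in fact change the word'' cannot be the inductive statement --- in a minimal chain a move that changes nothing is deletable; what you need is that no such move occurs at all, which is again exactly the monotonicity claim.) Part (ii) via the opposite monoid is fine, but it inherits the same gap.
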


\begin{proof}
 $(i)$ Suppose that the result is false. Let
$w \in \mathrm{FM}_n$ and $1 \leq i_0,j_0 \leq n$
such that $\pi(wx_{i_0}x_{j_0})\in Sz$ and
$x_{i_0}x_{j_0} \notin A$. Hence there exist
$w_{0,1}$, $w_{0,2} \in \mathrm{FM}_n$ such that
$\pi(w_{0,1}w_{0,2})=\pi(wx_{i_0}x_{j_0})$ and
$\pi(w_{0,2})=z$. Thus there exist
$w_0,w_1,\ldots,w_t \in \mathrm{FM}_n$ such that
$w_0=w_{0,1}w_{0,2}$, $w_t=wx_{i_0}x_{j_0}$,

\begin{equation} \label{xeq4}
\begin{split}
& \pi(w_0)=\pi(w_1)=\ldots=\pi(w_t)=\pi(wx_{i_0}x_{j_0})
\end{split}
\end{equation}
and there exist $w_{i,2}\in  \mathrm{FM}_n$,
for $i=1,2,\ldots,t$, and $w_{j,1}', w_{j,2}', w_{j,3}' \in \mathrm{FM}_n$, for
$j=0,1,\ldots,t-1$, such that

\begin{equation} \label{xeq5}
\begin{split}
& w_0 = w_{0,1}w_{0,2} = w_{0,1}'w_{0,2}'w_{0,3}', \\
& w_k = w_{k-1,1}'w_{k,2}w_{k-1,3}'=w_{k,1}'w_{k,2}'w_{k,3}' \text{ , for } k=1,2,\ldots,t-1,\\
& w_t = w_{t-1,1}'w_{t,2}w_{t-1,3}',  \\
& \pi(w_{i,2})=\pi(w_{j,2}')=z,
\end{split}
\end{equation}
for all $i=0,1,2,\ldots,t$ and for all $j=0,1,\ldots,t-1$.

We choose a sequence $w_0,w_1,\dots ,w_t$,
with a decomposition (\ref{xeq5}), such that
$w_k=x_{k(1)}x_{k(2)} \cdots x_{k(n+m)}$, for all $k=0,1,\dots,t$, and
$x_{t(n+m-1)}x_{t(n+m)}\notin A$,
with $t$  minimal.  By the minimality of $t$,
we have that

\begin{equation} \label{xeq6}
\begin{split}
& x_{k(n+m-1)}x_{k(n+m)} \in A \text{ for all }  k=1,2,\ldots,t-1 ,\\
& x_{t(n+m-1)}x_{t(n+m)} \notin A ,\\
& \pi(x_{k(m+1)}x_{k(m+2)}\cdots x_{k(m+n)}) \neq z \text{ for all } k=1,2,\ldots,t  .
\end{split}
\end{equation}
Since $\pi(w_{0,2})=z$ and $\pi(x_{1(m+1)}x_{1(m+2)} \cdots x_{1(m+n)}) \neq z$, we have that
$1 \leq |w_{0,3}'| < n$. Note that then
the subwords $w_{0,2}$ and $w_{0,2}'$ overlap in $w_0$.

Suppose that $t=1$. In this case, since
$\pi(x_{0(m+1)}x_{0(m+2)}\cdots x_{0(m+n)})=z$
and $ x_{1(n+m-1)}x_{1(n+m)} \notin A$, we have
that $|w_{0,3}'|=1$. Hence $w_{0,3}'=x_{0(m+n)}$
and $$x_{0(m+n)}w_{0,2}=w_{0,2}'x_{0(m+n)}.$$
Thus there exist $\tau_1,\tau_2\in H$ such that
$$x_{0(m+n)}x_{\tau_1(1)}x_{\tau_1(2)}\cdots
x_{\tau_1(n)}=x_{\tau_2(1)}x_{\tau_2(2)}\cdots
x_{\tau_2(n)}x_{0(m+n)}.$$ Therefore
$$\tau_1(1)=\tau_2(2),\tau_1(2)=\tau_2(3),\dots
,\tau_1(n-1)=\tau_2(n),
\tau_1(n)=0(m+n)=\tau_2(1).$$ Hence
$\tau_2^{-1}\tau_1=(1,2,\dots ,n)$,  in
contradiction with the assumption. Therefore
$t>1$.
\medskip

{\em Claim:} $0<|w_{j,3}'|-|w_{j-1,3}'|<n$ for all $j=1,2,\ldots,t-1$.
\medskip

Suppose that the claim is false. Let $r \in \{1, \dots ,t-1 \}$ be the smallest value
such that either $|w_{r,3}'| \leq |w_{r-1,3}'|$ or  $ |w_{r,3}'|-|w_{r-1,3}'|\geq n$.

Suppose that $|w_{r,3}'| \leq |w_{r-1,3}'|$.

If $|w_{r,3}'|=|w_{r-1,3}'|$, then, since $w_r=w_{r-1,1}'w_{r,2}w_{r-1,3}'=w_{r,1}'w_{r,2}'w_{r,3}'$, we have that
$w_{r-1,3}'=w_{r,3}'$ and $w_{r-1,1}'=w_{r,1}'$. Hence, in this case, the sequence
$w_0,w_1, \dots ,w_{r-1},w_{r+1},w_{r+2},\dots , w_t$ is a shorter sequence with a decomposition of type
(\ref{xeq5}), in contradiction
with the minimality of $t$.
Hence $|w_{r,3}'| < |w_{r-1,3}'|$.

If $|w_{r-1,3}'|-|w_{r,3}'|<n$, then   $w_{r,2}$ and $w_{r,2}'$ overlap in $w_r$.
Now we have

\begin{equation} \label{xeq8}
\begin{split}
 &w_{r-1} = w_{r-2,1}'w_{r-1,2}w_{r-2,3}' = w_{r-1,1}'w_{r-1,2}'w_{r-1,3}', \\
 &w_r = w_{r-1,1}'w_{r,2}w_{r-1,3}' = w_{r,1}'w_{r,2}'w_{r,3}',
\end{split}
\end{equation}
(here, if $r=1$, we  agree that
$w_{r-2,1}'=w_{0,1}$ and $w_{r-2,3}' =1$). Since
$0<|w_{r-1,3}'|-|w_{r-2,3}'|<n$, we have that
$w_{r-1,2}$ and $w_{r-1,2}'$ overlap in
$w_{r-1}$. Since $w_{r,2}$ and $w_{r,2}'$ also
overlap in $w_r$ and $
|w_{r-2,3}'|,|w_{r,3}'|<|w_{r-1,3}'|$, by
Lemma~\ref{L1}, we have that $w_{r-1}=w_r$. Now
the sequence $w_0,w_1,\dots
,w_{r-1},w_{r+1},w_{r+2},\dots ,w_t$ with the
decomposition as in (\ref{xeq5}), except for
$w_{r-1}=w_{r-2,1}'w_{r-1,2}w_{r-2,3}' =
w_{r,1}'w_{r,2}'w_{r,3}'$, is a shorter sequence
with a decomposition of type (\ref{xeq5}), in
contradiction with the minimality of $t$. Hence
$|w_{r-1,3}'|-|w_{r,3}'| \geq n$.

Since $|w_{0,3}'|<n$, we have that $r>1$.
Let $l\in\{ 0,1,\dots ,r-2\}$ be the smallest value
such that $|w_{l+1,3}'|-|w_{r,3}'|\geq n$. Since $0<|w_{j,3}'|-|w_{j-1,3}'|<n$, for all $j=1,2,\dots ,r-1$, there
exists $u\in \mathrm{FM}_n$ such that
$w_{l+1,3}'=uw_{r,2}'w_{r,3}'$. Now we have

\begin{equation} \label{xeq10a}
\begin{split}
 &w_{l} = w_{l-1,1}'w_{l,2}w_{l-1,3}' = w_{l,1}'w_{l,2}'w_{l,3}', \\
 &w_{l+1} = w_{l,1}'w_{l+1,2}w_{l,3}' = (w_{l+1,1}'w_{l+1,2}'u)w_{r,2}'w_{r,3}',
\end{split}
\end{equation}
(here, if $l=0$, we put $w_{l-1,1}'=w_{0,1}$ and
$w_{l-1,3}'=1$). Since
$0<|w_{l,3}'|-|w_{l-1,3}'|<n$, we have that
$w_{l,2}$ and $w_{l,2}'$ overlap in $w_{l}$.
Since $w_{l+1,2}$ and $w_{l+1,2}'$ overlap in
$w_{l+1}$, we have that $|w_{l,3}'|>|w_{r,3}'|$.
By the choice of $l$, $|w_{l,3}'|-|w_{r,3}'|<n$.
Hence $w_{l+1,2}$ and $w_{r,2}'$ overlap in
$w_{l+1}$. Thus, applying Lemma~\ref{L1} to
(\ref{xeq10a}), we obtain that $w_{l}=w_{l+1}$.
Now the sequence $w_0,w_1,\dots
,w_{l},w_{l+2},w_{l+3},\dots ,w_t$ with the
decomposition as in (\ref{xeq5}), except for
$w_{l}=w_{l-1,1}'w_{l,2}w_{l-1,3}' =
w_{l+1,1}'w_{l+1,2}'w_{l+1,3}'$, is a shorter
sequence with a decomposition of type
(\ref{xeq5}), in contradiction with the
minimality of $t$. Hence
$|w_{r,3}'|>|w_{r-1,3}'|$. Therefore
$|w_{r,3}'|-|w_{r-1,3}'|\geq n$.

Since
$w_r=w_{r-1,1}'w_{r,2}w_{r-1,3}'=w_{r,1}'w_{r,2}'w_{r,3}'$,
we  also have that $|w_{r-1,1}'|
=|w_{r,3}'|-|w_{r-1,3}'| +|w_{r,1}'| \geq
|w_{r,1}'|+n$ and  therefore
\begin{eqnarray} \label{fourth*}
w_{r-1,1}'\in w_{r,1}'w_{r,2}'\mathrm{FM}_n.
\end{eqnarray}
Since $0<|w_{0,3}'|<|w_{1,3}'|<\dots <
|w_{r-1,3}'|$ and
$$w_k=w_{k-1,1}'w_{k,2}w_{k-1,3}'=w_{k,1}'w_{k,2}'w_{k,3}',$$
for all $k=1,2,\dots , r-1$, we have  that $
|w_{k-1,1}'|>|w_{k,1}'|$ and thus
$w_{k-1,1}'\in w_{k,1}'\mathrm{FM}_n$, for all
$k=1,2,\dots , r-1$.  Hence, for all
$k=0,1,\ldots , r-1$, $w_{k,1}'\in w_{r-1,1}'
\mathrm{FM}_{n}$, and therefore from
(\ref{fourth*}), there exist $v_k'\in
\mathrm{FM}_n$ such that
$$w_{k,1}'=w_{r,1}'w_{r,2}'v_k',$$ for all
$k=0,1,2,\dots , r-1$.

Consider the following sequence:

\begin{equation} \label{xeq14}
\begin{split}
 & w_0=w_{0,1}w_{0,2}=w_{r,1}'w_{r,2}'(v_0'w_{0,2}'w_{0,3}'), \\
 &w_1'=w_{r,1}'w_{r+1,2}(v_0'w_{0,2}'w_{0,3}')=(w_{r,1}'w_{r+1,2}v_0')w_{0,2}'w_{0,3}', \\
 &w_2'=(w_{r,1}'w_{r+1,2}v_0')w_{1,2}w_{0,3}'=(w_{r,1}'w_{r+1,2}v_1')w_{1,2}'w_{1,3}', \\
 &\hspace{0.2 cm} \vdots   \\
 &w_{r-1}'=(w_{r,1}'w_{r+1,2}v_{r-3}')w_{r-2,2}w_{r-3,3}'=(w_{r,1}'w_{r+1,2}v_{r-2}')w_{r-2,2}'w_{r-2,3}', \\
 &w_r'=(w_{r,1}'w_{r+1,2}v_{r-2}')w_{r-1,2}w_{r-2,3}'=(w_{r,1}'w_{r+1,2}v_{r-1}')w_{r-1,2}'w_{r-1,3}', \\
 &w_{r+1}'=(w_{r,1}'w_{r+1,2}v_{r-1}')w_{r,2}w_{r-1,3}'.
\end{split}
\end{equation}
Since
$w_r=w_{r-1,1}'w_{r,2}w_{r-1,3}'=w_{r,1}'w_{r,2}'w_{r,3}'$
and $w_{r-1,1}'=w_{r,1}'w_{r,2}'v_{r-1}'$, we
have that $w_{r,3}'=v_{r-1}'w_{r,2}w_{r-1,3}'$.
Hence
$$w_{r+1}=w_{r,1}'w_{r+1,2}w_{r,3}'=w_{r,1}'w_{r+1,2}v_{r-1}'w_{r,2}w_{r-1,3}'=w_{r+1}'.$$
 As $|w_{0,2}|=n \leq |v_{0}'w_{0,2}'w_{0,3}'|$ we know that
$w_1'\in \mathrm{FM}_nw_{0,2}$ and $\pi (w_{1}')\in Sz$. Now the
the sequence $w_1',w_2',\dots ,w_r',w_{r+1},w_{r+2},\dots ,w_t$,
with the decomposition (\ref{xeq14}) for $w_1',w_2',\dots ,w_r'$,
the decomposition
$$w_{r+1}=(w_{r,1}'w_{r+1,2}v_{r-1}')w_{r,2}w_{r-1,3}'=w_{r+1,1}'w_{r+1,2}'w_{r+1,3}',$$
 for $w_{r+1}$ and the decomposition
(\ref{xeq5}) for $w_{r+2},\dots ,w_t$, is a
shorter sequence with a decomposition of type
(\ref{xeq5}).  This is in contradiction with
the minimality of $t$. Therefore the claim
follows.

 So we have that $0<|w_{0,3}'|< |w_{1,3}'| <
\cdots < |w_{t-1,3}'|$. Because $w_{t-1} =
w_{t-1,1}'w_{t-1,2}'w_{t-1,3}'$ and
$w_{t}=w_{t-1,1}'w_{t,2}w_{t-1,3}'$, we obtain
that
$$x_{(t-1)(m+n-1)}x_{(t-1)(m+n)}=x_{(t)(m+n-1)}x_{(t)(m+n)}.$$
 However, by $(\ref{xeq6})$, we know that
$x_{(t-1)(m+n-1)}x_{(t-1)(m+n)}\in A$, while we
also have that $x_{t(m+n-1)} x_{t(m+n)}\not\in
A$, a contradiction.   Therefore part $(i)$
follows.

 Part $(ii)$ follows by considering part (i)
to the opposite monoid $S^{opp}$.
\end{proof}

\section{Cancellativity of $S_n(H)$}

 Let $H$ be a subgroup of $\Sym_{n}$. For $S_{n}(H)$ to be
cancellative, a necessary condition is that $H_{1}=H_{n}=\{ \id\}$.
Hence, $S_{n}(\Sym_{n})$ with $n\geq 3$, and $S_{n}(\Alt_{n})$ with
$n\geq 4$ are not cancellative. In \cite{alghomshort} it is shown
that $S_{n}(\langle (1,2,\ldots , n) \rangle )$ is cancellative and
has a group of fractions. We now prove that for $H$ abelian,
$H_{1}=H_{n}=\{ \id\}$ also is a sufficient condition for $S_{n}(H)$
to be cancellative. Note that if also $n\geq 3$ and $(1,2,\ldots ,
n)\not\in H $ then from Lemma~\ref{L2}, $S_{n}(H)z\cap
S_{n}(H)x_{1}^{2}=\emptyset$ and $x_{1}^{2}S_{n}(H)\cap zS_{n}(H)
=\emptyset$. Therefore, for such $H$, $S_{n}(H)$ does not have a
group of fractions.

\begin{theorem}\label{T3}
Let H be an abelian subgroup of $Sym_n$, and let $S=S_n(H)$. Then
$S$ is cancellative if and only if $H_1=H_n=\{ \id\}$.
\end{theorem}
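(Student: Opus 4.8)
The plan is to prove the two directions separately. The necessity of $H_1=H_n=\{\id\}$ is the easy half, and is already indicated in the paragraph preceding the theorem: if $\sigma\in H_n$ is nontrivial, then $z=a_1\cdots a_n=a_{\sigma(1)}\cdots a_{\sigma(n)}$ with $\sigma(n)=n$, so both words end in $a_n$; cancelling $a_n$ on the right (which must be possible in a cancellative monoid) forces $a_1\cdots a_{n-1}=a_{\sigma(1)}\cdots a_{\sigma(n-1)}$ as an equality in $S$. I would argue that this collapses the free structure on the first $n-1$ generators and produces a genuine counterexample to cancellativity (for instance by exhibiting two distinct preimages under $\pi$ that become equal, or by a length/degree argument showing the relation cannot hold freely). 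The symmetric argument with $H_1$ handles left cancellation. So the real content is sufficiency.

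For sufficiency, assume $H_1=H_n=\{\id\}$; I want to show $S$ is both left and right cancellative. I would treat the case $(1,2,\dots,n)\in H$ and the case $(1,2,\dots,n)\notin H$ separately, since Lemma~\ref{L2} requires $(1,2,\dots,n)\notin H$. If $(1,2,\dots,n)\in H$, then since $H$ is abelian and $H_n=\{\id\}$, I expect $H$ must actually be the cyclic group $\langle(1,2,\dots,n)\rangle$: any element of $H$ commuting with the full $n$-cycle and fixing the structure is a power of the cycle, and the stabilizer conditions pin this down. In that case cancellativity follows from the known result of \cite{alghomshort} that $S_n(\langle(1,2,\dots,n)\rangle)$ is cancellative (it even has a group of fractions). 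So the main case is $(1,2,\dots,n)\notin H$, where Lemma~\ref{L2} applies.

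In the main case I would prove right cancellativity as follows: suppose $\pi(u)\pi(x_k)=\pi(v)\pi(x_k)$ in $S$ for words $u,v\in\FM_n$ and a generator $a_k$; more generally $\pi(u)c=\pi(v)c$ for $c\in S$, reduced to the generator case. The strategy is to analyze the sequence of elementary rewriting steps (applications of the defining relations $a_1\cdots a_n=a_{\sigma(1)}\cdots a_{\sigma(n)}$) that transform a word representing $\pi(u)a_k$ into one representing $\pi(v)a_k$. Since every defining relation is an equality of words of length $n$ whose last-two-letter block lies in the set $A$ of \eqref{third*} precisely when a suffix equals $z$, Lemma~\ref{L2}(i) controls which two-letter suffixes $x_ix_j$ can ever sit at the right end of a word whose image lies in $Sz$. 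The idea is that the trailing generator $a_k$ is ``protected'': because $H_n=\{\id\}$, no rewriting can move information across the final position in a way that would let two different left factors become equal after appending $a_k$. Concretely I would set up a minimal-length rewriting sequence (as in the proof of Lemma~\ref{L2}) and use Lemma~\ref{L1}(i) to collapse overlapping $z$-subwords, forcing $\pi(u)=\pi(v)$. Right cancellation by an arbitrary element $c$ then follows by writing $c$ as a product of generators and cancelling one at a time. Left cancellativity follows symmetrically using Lemma~\ref{L1}(ii) and Lemma~\ref{L2}(ii), i.e. by passing to the opposite monoid $S^{\mathrm{opp}}$.

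The hard part will be the rewriting analysis in the main case: controlling all the ways the relations can be applied to a word with a fixed trailing generator, and showing the two-letter suffix behaves rigidly. This is exactly where Lemmas~\ref{L1} and~\ref{L2} were engineered to do the work, so I expect the argument to mirror their minimal-counterexample/overlap bookkeeping. The secondary subtlety is verifying the structural claim that $(1,2,\dots,n)\in H$ together with $H$ abelian and $H_n=\{\id\}$ forces $H=\langle(1,2,\dots,n)\rangle$, so that the cyclic case of \cite{alghomshort} genuinely covers it.
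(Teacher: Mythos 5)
Your proposal is correct and takes essentially the same route as the paper: necessity via the short-word argument, reduction of the case $(1,2,\dots,n)\in H$ to $H=\langle(1,2,\dots,n)\rangle$ covered by \cite{alghomshort}, and, in the main case $(1,2,\dots,n)\notin H$, a minimal rewriting sequence for $ux_k$ and $vx_k$ collapsed via Lemma~\ref{L1}, with $H_n=\{\id\}$ supplying the final contradiction and the opposite monoid giving left cancellativity. The paper's proof of Theorem~\ref{T3} executes exactly this plan (repeating the minimal-sequence/overlap bookkeeping from the proof of Lemma~\ref{L2} and ending with two relation words sharing their last letter, hence a nontrivial element of $H_n$), so what remains in your sketch is only the bookkeeping you already point to.
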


\begin{proof}

 That the conditions $H_1=H_n=\{ \id\}$ are necessary has been
mentioned above. For the converse, assume $H_1=H_n=\{ \id\}$. We
shall prove that $S$ is right cancellative. Then, as mentioned
before, working with $S^{opp}$, the left cancellativity will follow.
If $(1,2,\dots ,n)\in H$ then $H$ is transitive. As $H_1$ is
trivial, we then get that  $$n=|\{\sigma (1)\mid \sigma\in
H\}|=|H|/|H_1|=|H|.$$ Hence $H=\langle (1,2,\dots ,n)\rangle$.
Therefore, as mentioned above (\cite[Theorem 2.2]{alghomshort}) $S$
is cancellative.

Thus we may assume that $(1,2,\dots ,n)\notin
H$.

Suppose that $S$ is not right cancellative. Then
there exist $a,b\in S$ and $1\leq i\leq n$ such
that $a\neq b$ and $aa_i=ba_i$. Let $u,v\in
\FM_n$  be such that $\pi(u)=a$ and
$\pi(v)=b$. Since $aa_i=ba_i$, there exist
$w_0,w_1,\dots ,w_t\in\FM_n$ such that
$w_0=ux_i$, $w_t=vx_i$, $$\pi(w_0)=\pi(w_1)=\dots
=\pi(w_t)$$ and there exist $w_{i,2}\in \FM_n$,
for $i=1,\dots ,t$, and
$w'_{j,1},w'_{j,2},w'_{j,3}\in \FM_n$, for
$j=0,1,\dots ,t-1$, such that
\begin{equation} \label{xeq5bis}
\begin{split}
& w_0 = w_{0,1}'w_{0,2}'w_{0,3}', \\ & w_k =
w_{k-1,1}'w_{k,2}w_{k-1,3}'=w_{k,1}'w_{k,2}'w_{k,3}'
\text{ , for } k=1,2,\ldots,t-1,\\ & w_t =
w_{t-1,1}'w_{t,2}w_{t-1,3}',  \\ &
\pi(w_{i,2})=\pi(w_{j,2}')=z,
\end{split}
\end{equation}
for all $i=0,1,2,\ldots,t$ and for all
$j=0,1,\ldots,t-1$.

We choose a sequence $w_0,w_1,\dots ,w_t$, with a
decomposition (\ref{xeq5bis}), such that
$w_k=x_{k(1)}x_{k(2)} \cdots x_{k(m)}$, for all
$k=0,1,\dots,t$, $$\pi(x_{0(1)}\cdots
x_{0(m-1)})\neq\pi(x_{t(1)}\cdots
x_{t(m-1)})\quad \mbox{and}\quad
x_{0(m)}=x_{t(m)},$$ with $t$ minimal.

By the minimality of $t$, we have that
$w'_{0,3}=1$.

Suppose that $t=1$. In this case,
$w_0=w_{0,1}'w_{0,2}'$ and $w_1=w_{0,1}'w_{1,2}$,
with $w_{0,2}'\neq w_{1,2}$. Since
$\pi(w_{0,2}')=\pi(w_{1,2})=z$, there exist
$\sigma,\tau\in H$ such that $\sigma\neq \tau$,
$$w_{0,2}'=x_{\sigma(1)}\cdots
x_{\sigma(n)}\quad\mbox{and}\quad w_{1,2}=x_{\tau(1)}\cdots
x_{\tau(n)}.$$ Since
$x_{\sigma(n)}=x_{0(m)}=x_{t(m)}=x_{1(m)}=x_{\tau(n)}$, we have that
$\id \neq \sigma^{-1}\tau\in H_n$. But  this yields a contradiction
as, by assumption,  $H_n=\{ \id\}$. Therefore $t>1$.
\medskip

{\em Claim:} $0<|w_{j,3}'|-|w_{j-1,3}'|<n$ for
all $j=1,2,\ldots,t-1$.
\medskip

Suppose that the claim is false. Let $r \in \{1,
\dots ,t-1 \}$ be the smallest value such that
either $|w_{r,3}'| \leq |w_{r-1,3}'|$ or  $
|w_{r,3}'|-|w_{r-1,3}'|\geq n$.

Suppose that $|w_{r,3}'| \leq |w_{r-1,3}'|$.

If $|w_{r,3}'|=|w_{r-1,3}'|$, then, since
$w_r=w_{r-1,1}'w_{r,2}w_{r-1,3}'=w_{r,1}'w_{r,2}'w_{r,3}'$,
we have that $w_{r-1,3}'=w_{r,3}'$ and
$w_{r-1,1}'=w_{r,1}'$. Hence, in this case, the
sequence $w_0,w_1, \dots
,w_{r-1},w_{r+1},w_{r+2},\dots , w_t$ is a
shorter sequence with a decomposition of type
(\ref{xeq5bis}), in contradiction with the
minimality of $t$. Hence $|w_{r,3}'| <
|w_{r-1,3}'|$. Since $w'_{0,3}=1$, we have that
$r>1$.

If $|w_{r-1,3}'|-|w_{r,3}'|<n$, then   $w_{r,2}$
and $w_{r,2}'$ overlap in $w_r$. Now we have

\begin{equation} \label{xeq8aa}
\begin{split}
 &w_{r-1} = w_{r-2,1}'w_{r-1,2}w_{r-2,3}' = w_{r-1,1}'w_{r-1,2}'w_{r-1,3}', \\
 &w_r = w_{r-1,1}'w_{r,2}w_{r-1,3}' = w_{r,1}'w_{r,2}'w_{r,3}'.
\end{split}
\end{equation}
Since $0<|w_{r-1,3}'|-|w_{r-2,3}'|<n$, we have
that $w_{r-1,2}$ and $w_{r-1,2}'$ overlap in
$w_{r-1}$. Since $w_{r,2}$ and $w_{r,2}'$ also
overlap in $w_r$ and $
|w_{r-2,3}'|,|w_{r,3}'|<|w_{r-1,3}'|$,  we
obtain from Lemma~\ref{L1}  that $w_{r-1}=w_r$.
Now the sequence $w_0,w_1,\dots
,w_{r-1},w_{r+1}$, $w_{r+2},\dots ,w_t$ with the
decomposition as in (\ref{xeq5bis}), except for
$$w_{r-1}=w_{r-2,1}'w_{r-1,2}w_{r-2,3}' =
w_{r,1}'w_{r,2}'w_{r,3}',$$ is a shorter sequence
with a decomposition of type (\ref{xeq5bis}), in
contradiction with the minimality of $t$. Hence
$|w_{r-1,3}'|-|w_{r,3}'| \geq n$.

 Recall that  $w_{0,3}'=1$. Thus we have that $r>1$. Let
$l\in\{ 0,1,\dots ,r-2\}$ be the smallest value such that
$|w_{l+1,3}'|-|w_{r,3}'|\geq n$. Since
$0<|w_{j,3}'|-|w_{j-1,3}'|<n$, for all $j=1,2,\dots ,r-1$, there
exists $u\in \mathrm{FM}_n$ such that
$w_{l+1,3}'=uw_{r,2}'w_{r,3}'$. Since $0<|w'_{1,3}|-|w'_{0,3}|<n$
and $w'_{0,3}=1$, we have that $l>0$. Now, we have

\begin{equation} \label{xeq10aa}
\begin{split}
 &w_{l} = w_{l-1,1}'w_{l,2}w_{l-1,3}' = w_{l,1}'w_{l,2}'w_{l,3}', \\
 &w_{l+1} = w_{l,1}'w_{l+1,2}w_{l,3}' = (w_{l+1,1}'w_{l+1,2}'u)w_{r,2}'w_{r,3}',
\end{split}
\end{equation}
Since $0<|w_{l,3}'|-|w_{l-1,3}'|<n$, we have that
$w_{l,2}$ and $w_{l,2}'$ overlap in $w_{l}$.
Since $w_{l+1,2}$ and $w_{l+1,2}'$ overlap in
$w_{l+1}$, we have that $|w_{l,3}'|>|w_{r,3}'|$.
By the choice of $l$, $|w_{l,3}'|-|w_{r,3}'|<n$.
Hence $w_{l+1,2}$ and $w_{r,2}'$ overlap in
$w_{l+1}$. Thus, applying the Lemma~\ref{L1} to
(\ref{xeq10aa}), we obtain that $w_{l}=w_{l+1}$.
Now the sequence $w_0,w_1,\dots
,w_{l},w_{l+2},w_{l+3},\dots ,w_t$ with the
decomposition as in (\ref{xeq5bis}), except for
$w_{l}=w_{l-1,1}'w_{l,2}w_{l-1,3}' =
w_{l+1,1}'w_{l+1,2}'w_{l+1,3}'$, is a shorter
sequence with a decomposition of type
(\ref{xeq5bis}), in contradiction with the
minimality of $t$. Hence
$|w_{r,3}'|>|w_{r-1,3}'|$. Therefore
$|w_{r,3}'|-|w_{r-1,3}'|\geq n$.

Since
$w_r=w_{r-1,1}'w_{r,2}w_{r-1,3}'=w_{r,1}'w_{r,2}'w_{r,3}'$,
 we thus have that $|w_{r-1,1}'|=
|w_{r,3}'|-|w_{r-1,3}'|+|w_{r,1}'| \geq
n+|w_{r,1}'|$ and therefore
\begin{eqnarray}
\label{sixth*} w_{r-1,1}'\in
w_{r,1}'w_{r,2}'\mathrm{FM}_n.
\end{eqnarray}
Since $0<|w_{0,3}'|<|w_{1,3}'|<\dots <
|w_{r-1,3}'|$ and
$$w_k=w_{k-1,1}'w_{k,2}w_{k-1,3}'=w_{k,1}'w_{k,2}'w_{k,3}',$$
for all $k=1,2,\dots , r-1$, we have that
$w_{k-1,1}'\in w_{k,1}'\mathrm{FM}_n$, for all
$k=1,2,\dots , r-1$. Thus,  from
(\ref{sixth*}), for all $k=0,1,2,\ldots , r-1$,
there exists $v_k'\in \mathrm{FM}_n$ such that
$$w_{k,1}'=w_{r,1}'w_{r,2}'v_k'.$$

Consider the following sequence:

\begin{equation} \label{xeq14aa}
\begin{split}
 & w_0=w_{r,1}'w_{r,2}'(v_0'w_{0,2}'w_{0,3}'), \\
 &w_1'=w_{r,1}'w_{r+1,2}(v_0'w_{0,2}'w_{0,3}')=(w_{r,1}'w_{r+1,2}v_0')w_{0,2}'w_{0,3}', \\
 &w_2'=(w_{r,1}'w_{r+1,2}v_0')w_{1,2}w_{0,3}'=(w_{r,1}'w_{r+1,2}v_1')w_{1,2}'w_{1,3}', \\
 &\hspace{0.2 cm} \vdots   \\
 &w_{r-1}'=(w_{r,1}'w_{r+1,2}v_{r-3}')w_{r-2,2}w_{r-3,3}'=(w_{r,1}'w_{r+1,2}v_{r-2}')w_{r-2,2}'w_{r-2,3}', \\
 &w_r'=(w_{r,1}'w_{r+1,2}v_{r-2}')w_{r-1,2}w_{r-2,3}'=(w_{r,1}'w_{r+1,2}v_{r-1}')w_{r-1,2}'w_{r-1,3}', \\
 &w_{r+1}'=(w_{r,1}'w_{r+1,2}v_{r-1}')w_{r,2}w_{r-1,3}'.
\end{split}
\end{equation}
Since
$w_r=w_{r-1,1}'w_{r,2}w_{r-1,3}'=w_{r,1}'w_{r,2}'w_{r,3}'$
and $w_{r-1,1}'=w_{r,1}'w_{r,2}'v_{r-1}'$, we
have that $w_{r,3}'=v_{r-1}'w_{r,2}w_{r-1,3}'$.
Hence
$$w_{r+1}=w_{r,1}'w_{r+1,2}w_{r,3}'=w_{r,1}'w_{r+1,2}v_{r-1}'w_{r,2}w_{r-1,3}'=w_{r+1}'.$$
Note that if $w_1'=x_{k_1}\cdots x_{k_m}$, then
$x_{k_m}=x_{0(m)}=x_{t(m)}$ and
$$\pi(x_{k_1}\cdots
x_{k_{m-1}})=\pi(x_{0(1)}\cdots x_{0(m-1)})\neq
\pi(x_{t(1)}\cdots x_{t(m-1)}).$$ Now the
sequence $w_1',w_2',\dots
,w_r',w_{r+1},w_{r+2},\dots ,w_t$, with the
decomposition (\ref{xeq14aa}) for
$w_1',w_2',\dots ,w_r'$, the decomposition
$$w_{r+1}=(w_{r,1}'w_{r+1,2}v_{r-1}')w_{r,2}w_{r-1,3}'=w_{r+1,1}'w_{r+1,2}'w_{r+1,3}'$$
 for $w_{r-1}$, and the decomposition
(\ref{xeq5bis}) for $w_{r+2},\dots ,w_t$, is a
shorter sequence with a decomposition of type
(\ref{xeq5bis}), in contradiction with the
minimality of $t$. Therefore the claim follows.

In particular we have that $|w'_{j,3}|>0$ for all
$j=1,\dots, t-1$. Hence
$x_{0(m)}=x_{t(m)}=x_{1(m)}$. Now,
$w_0=w_{0,1}'w_{0,2}'$ and $w_1=w_{0,1}'w_{1,2}$,
with $w_{0,2}'\neq w_{1,2}$, by the minimality of
$t$. Since $\pi(w_{0,2}')=\pi(w_{1,2})=z$, there
exist  different $\sigma,\tau\in H$ such that
 $$w_{0,2}'=x_{\sigma(1)}\cdots
x_{\sigma(n)}\quad\mbox{and}\quad
w_{1,2}=x_{\tau(1)}\cdots x_{\tau(n)}.$$ Since
$x_{\sigma(n)}=x_{0(m)}=x_{t(m)}=x_{1(m)}=x_{\tau(n)}$,
 we have that $\id\neq
\sigma^{-1}\tau\in H_n$. But  this yields a contradiction as, by
assumption, $H_n=\{ \id\}$. Therefore $S$ is right cancellative.
\end{proof}

\section{The finitely presented algebra $K[S_n(H)]$}

We begin with some properties of prime ideals.

Let $H$ be a subgroup of $\Sym_n$. Recall that $z=a_1a_2\cdots
a_n\in S_n(H)$. In \cite{alghomshort} it is proved that if
$H=\langle (1,2,\dots ,n)\rangle$  and $n\geq 3$, then $z$ is a
central element of $S_n(H)$ and $zS_n(H)$ is a minimal prime ideal
of $S_n(H)$.

We shall see that, for an arbitrary  abelian subgroup $H$ of
$\Sym_n$ the behaviour is different. Indeed we show that
$S_n(H)zS_n(H)$ is a prime ideal of $S_n(H)$,  for $n\geq 3$,
but it is not minimal in general.

 First we shall see that $S_n(H)zS_n(H)$ is a prime ideal of
$S_n(H)$ for an arbitrary non-transitive subgroup $H$ of $\Sym_n$.

\begin{lemma}\label{nontrans}
If $H$ is a non-transitive subgroup of $\Sym_n$, then
$S_n(H)zS_n(H)$ is a prime ideal of $S_n(H)$.
\end{lemma}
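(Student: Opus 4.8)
The plan is to verify the semigroup characterization of primeness: an ideal $P$ of a monoid is prime exactly when for all $a,b\notin P$ there is some $s$ (possibly $s=1$) with $asb\notin P$; equivalently, the complement of $P$ is an $m$-system. So I fix $a,b\in S=S_n(H)$ with $a,b\notin SzS$ and look for $s\in S$ with $asb\notin SzS$. Note $SzS$ is a proper ideal, since every element of $SzS$ has length at least $n\geq 2$, so the complement is nonempty.

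The main tool I would build is a colouring invariant coming from non-transitivity. As $H$ is not transitive, there is a proper nonempty $H$-invariant set $T\subseteq\{1,\dots,n\}$. Colour each generator by membership in $T$: set $\bar c(a_i)=1$ if $i\in T$ and $\bar c(a_i)=0$ otherwise, and extend to a monoid homomorphism $\bar c\colon S\to\{0,1\}^{*}$ into the free monoid on two letters. This $\bar c$ is well defined precisely because $T$ is $H$-invariant: in any word $x_{\sigma(1)}\cdots x_{\sigma(n)}$ representing $z$ the colour in position $j$ is $1$ iff $\sigma(j)\in T$ iff $j\in T$ (using $\sigma(T)=T$), so every representative of $z$ carries the \emph{same} colour word $\beta:=\bar c(z)\in\{0,1\}^{n}$, and $\beta$ contains both a $0$ and a $1$. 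Consequently $a\in SzS$ forces $\bar c(a)$ to contain $\beta$ as a factor, a clean necessary condition I can test against.

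For the separator I would take a long power $s=a_p^{N}$ of a single generator, with $N>n$. Since $\{0,1\}^{*}$ is free, the word $\bar c(asb)=\bar c(a)\,\bar c(a_p)^{N}\,\bar c(b)$ is an honest invariant: in every representative $W$ of $asb$ the colours, read left to right, spell this fixed string, which contains a monochromatic run of length $N$. Any realised $z$-block in $W$ is a contiguous factor of length $n$ whose colour word is $\beta$. Such a factor cannot lie inside the run (its colour word would be monochromatic, whereas $\beta$ uses both colours), and, as $N>n$, it cannot contain the whole run and hence cannot have positions on both sides of it. Thus every $z$-block occurring in a representative of $asb$ lies within the left part together with the run, or within the run together with the right part. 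Morally the run is a wall through which no $z$ can pass, so a $z$-block should force a $z$ already inside $a$ (giving $a\in SzS$) or inside $b$ (giving $b\in SzS$), contradicting the choice of $a,b$ and proving $asb\notin SzS$.

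The hard part is exactly this last reduction, turning a one-sided $z$-block into a $z$-block genuinely inside $a$ or $b$. The difficulty is that rewrites may cascade: a $z$-block straddling the boundary between the left part and the run mixes letters of $a$ with letters of the run, and since the defining relations rearrange letters within a single $z$-block while only preserving the colour in each position, the letters occupying the run's positions can change (their colours cannot), so the run is not literally frozen. The length argument forbids a block spanning the entire run but not one overlapping it in a few positions. I expect to handle this by tuning $p$ and $N$ to bound the overlap and then running a minimal-counterexample argument on the length of the rewriting chain, in the spirit of Lemma~\ref{L1}: take a shortest chain of relations producing a straddling $z$-block and use an overlap analysis to collapse it, thereby excising the run and exhibiting the $z$-block inside $a$ (respectively $b$). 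Carrying out this bookkeeping uniformly over all non-transitive $H$ — including cases such as $H=\langle(1,2,3)(4,5,6)\rangle$, where no invariant subset avoids consecutive integers and the single-letter wall is only partially impermeable — is the main obstacle.
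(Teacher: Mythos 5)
There is a genuine gap, and you have named it yourself: the ``wall'' $a_p^N$ is not impermeable at its two boundaries, and your colouring invariant cannot close this. The gap is not mere bookkeeping. For a single repeated letter $a_p$, no tuning of $p$ and $N$ gives a wall that works uniformly: in your own example $H=\langle(1,2,3)(4,5,6)\rangle\leq\Sym_6$, the orbit of $1$ and the orbit of $n$ together cover $\{1,\dots,n\}$, so every $p$ lies in one of them. If $p=\sigma(1)$ for some $\sigma\in H$, take $b$ with representative $x_{\sigma(2)}\cdots x_{\sigma(n)}$ (so $b\notin SzS$); then $a\,a_p^N\,b$ visibly contains the $z$-block $x_{\sigma(1)}x_{\sigma(2)}\cdots x_{\sigma(n)}$ straddling the right end of the run, so $a\,a_p^N\,b\in SzS$. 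Symmetrically, if $p=\sigma(n)$ the left end is breached. So the separator must at least be chosen depending on $a,b$, and proving that a good $p$ exists for given $a,b$ is essentially the same difficulty you started with. Worse, the repair you propose --- a minimal-counterexample overlap analysis ``in the spirit of Lemma~\ref{L1}'' --- is unavailable here: Lemma~\ref{L1} requires $H$ abelian with $H_1=\{\id\}$ or $H_n=\{\id\}$, whereas in the present lemma $H$ is only assumed non-transitive and may be non-abelian with large stabilizers (e.g.\ the copy of $\Sym_3\times\Sym_3$ inside $\Sym_6$).

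The paper's proof avoids all of this with a different, and much shorter, choice of separator: a \emph{two-letter} wall $a_j^2a_i^2$, where $j$ is outside the $H$-orbit of $n$ and $i$ is outside the $H$-orbit of $1$ (both exist precisely because $H$ is non-transitive). Then no $z$-block can end in $x_j$ and none can begin with $x_i$; since a $z$-block has pairwise distinct letters, it also cannot contain $x_jx_j$ or $x_ix_i$, nor (for $n\geq 3$) fit between them. Hence a factor of the form $x_{\sigma(1)}\cdots x_{\sigma(n)}$, $\sigma\in H$, can never meet the wall at all: the wall is literally frozen, every rewrite takes place entirely to its left or entirely to its right, and a trivial induction on the rewriting chain shows every representative of $u\,a_j^2a_i^2\,v$ has the form $P\,x_j^2x_i^2\,Q$ with $\pi(P)=u$, $\pi(Q)=v$. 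Any $z$-block must then lie inside $P$ or inside $Q$, contradicting $u,v\notin SzS$. Your colouring morphism is a clean formalization of the orbit-invariance that makes this work, but the essential missing idea is to place the orbit conditions on the wall letters themselves (one guarding each boundary, squared to kill interior overlaps), rather than to rely on a long monochromatic run whose boundaries relations can still erode.
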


\begin{proof}
Let $u,v\in S_n(H)\setminus S_n(H)zS_n(H)$. Since
$H$ is not transitive, there exist $1\leq i,
j\leq n$ such that $i\neq\sigma(1)$ and $j\neq
\sigma(n)$, for all $\sigma\in H$. It is then
clear that $ua_j^2a_i^2v\notin S_n(H)zS_n(H)$.
Thus $S_n(H)zS_n(H)$ is prime.
\end{proof}

Recall that a subgroup $H$ of $\Sym_n$ is semiregular if $H_i=\{
\id\}$ for all $1\leq i\leq n$.

\begin{lemma}\label{L3}
If $H$ is an abelian subgroup of $\Sym_n$ and $S=S_n(H)$, with
$n\geq 3$, then $SzS$ is a prime ideal of $S$.
\end{lemma}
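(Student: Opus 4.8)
The plan is to reduce to the transitive case and there to exhibit, for any two elements outside $SzS$, a ``barrier'' whose insertion keeps the product outside $SzS$. First I would record the reduction: by Lemma~\ref{nontrans} we may assume $H$ is transitive, and since $H$ is abelian and transitive it acts regularly, so $H_i=\{\id\}$ for every $i$ and $|H|=n$. The crucial observation I would establish is a word-level description of $SzS$: since every defining relation rewrites a length-$n$ subword $x_{\sigma(1)}\cdots x_{\sigma(n)}$ (which maps to $z$) into another such subword in the same position, the property ``$w$ contains a subword $w_0$ with $\pi(w_0)=z$'' is invariant along the congruence. Hence for $w\in\FM_n$ one has $\pi(w)\in SzS$ if and only if $w$ has a subword mapping to $z$; in particular $\pi(w)\notin SzS$ exactly when $w$ has no such subword. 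Note also that any subword mapping to $z$ consists of $n$ pairwise distinct letters, so it contains no factor $x_kx_k$.

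To prove primeness it then suffices to show: for $a=\pi(u),b=\pi(v)\notin SzS$ there is $s\in S$ with $asb\notin SzS$; equivalently, a word $w_s$ with $uw_sv$ having no subword mapping to $z$. I would take the barrier $w_s=x_cx_1^{2n}x_{c'}$, with $c,c'$ chosen below, and $s=\pi(w_s)$. Because $u$ and $v$ individually have no subword mapping to $z$, and because such a subword cannot meet the block $x_1^{2n}$ in two (necessarily consecutive, hence equal) positions, every candidate subword of $uw_sv$ lies inside $u$, inside $v$, or is one of the four boundary windows $\mathrm{suffix}_{n-1}(u)\,x_c$, $\mathrm{suffix}_{n-2}(u)\,x_cx_1$, $x_{c'}\,\mathrm{prefix}_{n-1}(v)$, $x_1x_{c'}\,\mathrm{prefix}_{n-2}(v)$ (the length $2n+2$ of the barrier prevents a window from meeting both $u$ and $v$). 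Each of the first two windows maps to $z$ for at most one value of $c$, since the letters coming from $u$ already determine at most one member of $H$; symmetrically for $c'$. As $n\geq 3$ there remain admissible choices of $c$ and $c'$, and for these $uw_sv$ has no subword mapping to $z$, so $asb=\pi(uw_sv)\notin SzS$, proving that $SzS$ is prime.

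The main obstacle is exactly the control of the boundary windows meeting $u$ (and, symmetrically, $v$): a naive one-letter barrier such as $x_1^2$ does not suffice, since a single $z$-letter straddling the junction can still complete a suffix of $u$ to a subword mapping to $z$ (this is the content behind Lemma~\ref{L2}, where such completions are precisely the pairs in $A$ and $\tilde A$). The distinct-letter property of $z$-subwords is what forces all deeper straddling windows to contain a repeated letter, and the estimate ``at most one bad letter per boundary window'' is what makes the choice of $c,c'$ possible once $n\geq 3$. Finally, when $(1,2,\dots,n)\in H$ the regularity of $H$ forces $H=\langle(1,2,\dots,n)\rangle$, and one may alternatively invoke \cite{alghomshort}, where $z$ is central and $zS=SzS$ is shown to be a (minimal) prime ideal; the barrier argument above, however, already covers this subcase uniformly.
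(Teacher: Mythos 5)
Your proof is correct, but it takes a genuinely different route from the paper's. The paper reduces to the transitive case via Lemma~\ref{nontrans}, invokes \cite[Proposition 3.2]{passman} to get regularity, disposes of the case $(1,2,\dots,n)\in H$ by citing \cite{alghomshort}, and for $(1,2,\dots,n)\notin H$ applies Lemma~\ref{L2} --- the technical rewriting lemma in whose proof the abelian hypothesis is actually consumed --- to the barrier $a_i^2a_j^2$, where $i$ matches the last letter of (a representative of) $u$ and $j$ the first letter of $v$; this matching forces any straddling occurrence of a word mapping to $z$ to contain a repeated letter, so no counting over boundary windows is needed. You instead rest everything on the monomial characterization $\pi^{-1}(SzS)=\bigcup_{\sigma\in H}\FM_n\, x_{\sigma(1)}\cdots x_{\sigma(n)}\,\FM_n$ (a fact the paper itself states, without proof, inside the proof of Theorem~\ref{L4}) together with the barrier $x_cx_1^{2n}x_{c'}$, choosing $c$ and $c'$ to avoid at most two bad values each. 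Your window analysis is sound; the only imprecision is that for the window $\mathrm{suffix}_{n-2}(u)\,x_cx_1$ it is the letters from $u$ together with the trailing $x_1$ (not the letters from $u$ alone) that pin down at most one $\sigma\in H$ --- the $n-2$ letters by themselves are compatible with two permutations --- and the degenerate cases $u=1$, $v=1$, or $|u|<n-1$ deserve a word, though they are trivially harmless. The comparison: your argument nowhere uses that $H$ is abelian, transitive, or that $(1,2,\dots,n)\notin H$, so it in fact proves that $SzS$ is prime for an arbitrary subgroup $H$ of $\Sym_n$ with $n\geq 3$; in particular your opening reduction via Lemma~\ref{nontrans} and the closing appeal to \cite{alghomshort} are superfluous. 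What the paper's route buys is economy within its own architecture: Lemma~\ref{L2} is needed anyway for cancellativity and for the Jacobson radical (Theorems~\ref{T3} and~\ref{T2}), and the matched-letter barrier $a_i^2a_j^2$ makes the boundary analysis disappear entirely, at the cost of tying the lemma to hypotheses it does not really need.
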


\begin{proof}
Let $u,v\in S\setminus SzS$.

 By Lemma~\ref{nontrans}, we may assume that $H$ is a transitive
subgroup of $\Sym_n$.  Because, by assumption, $H$ is abelian, by
\cite[Proposition 3.2]{passman} we  then have that $H$ is
semiregular. Therefore $n=|\{\sigma (1)\mid \sigma\in
H\}|=|H|/|H_1|=|H|$. By the   comment before the
Lemma~\ref{nontrans}, we may assume that $(1,2,\dots ,n)\notin H$.
Let $i,j$ be such that  $u\in Sa_i\cup \{ 1\}$ and $v\in a_jS\cup
\{1\}$. By Lemma~\ref{L2}, we have  that $ua_i\notin Sz$ and
$a_jv\notin zS$. Hence, since $n\geq 3$, we have that
$ua_i^2a_j^2v\notin SzS$. Therefore $SzS$ is a prime ideal of $S$
and the lemma follows.
\end{proof}

\begin{lemma}\label{AA}
Let $H$ be a  subgroup  of $\Sym_n$ such that $H_2=H_{n-1}=\{
\id\}$,  with $n\geq 3$. Then, for all $1\leq i\leq n$, there exist
$1\leq j,j'\leq n$ such that $j\neq i$, $j'\neq i$, $x_ix_j\notin A$
and $x_{j'}x_i\notin \tilde{A}$,  where $A$ and $\tilde{A}$ are
defined in (\ref{third*}).
\end{lemma}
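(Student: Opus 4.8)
The plan is to read the two stabilizer hypotheses as injectivity statements about coordinate maps on $H$ and then to finish by a direct counting argument; note that the abelianness of $H$ plays no role here, only the group structure and the conditions $H_2 = H_{n-1} = \{\id\}$ are used.

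First I would record the key observation. Since $H$ is a group, whenever $\sigma, \tau \in H$ satisfy $\sigma(n-1) = \tau(n-1)$ we get $\tau^{-1}\sigma \in H_{n-1} = \{\id\}$, hence $\sigma = \tau$; thus the map $\sigma \mapsto \sigma(n-1)$ is injective on $H$. In the same way, $H_2 = \{\id\}$ makes $\sigma \mapsto \sigma(2)$ injective on $H$.

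Next I would fix $i$ and treat the assertion $x_i x_j \notin A$. By the definition of $A$ in (\ref{third*}), one has $x_i x_j \in A$ exactly when there is some $\sigma \in H$ with $\sigma(n-1) = i$ and $\sigma(n) = j$. The injectivity just noted shows there is at most one $\sigma \in H$ with $\sigma(n-1) = i$, and hence at most one value of $j$ (namely $j = \sigma(n)$ for that $\sigma$, if it exists at all) for which $x_i x_j \in A$. Since $n \geq 3$, the index set $\{1, \dots, n\} \setminus \{i\}$ has at least two elements, so it cannot be exhausted by this single forbidden value; any surviving index is a $j$ with $j \neq i$ and $x_i x_j \notin A$.

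Finally, the assertion $x_{j'} x_i \notin \tilde{A}$ would be settled symmetrically: $x_{j'} x_i \in \tilde{A}$ means $\sigma(1) = j'$ and $\sigma(2) = i$ for some $\sigma \in H$, and the injectivity of $\sigma \mapsto \sigma(2)$ coming from $H_2 = \{\id\}$ forbids at most one value $j' = \sigma(1)$; as before the at least two available indices $j' \neq i$ cannot all be forbidden. I do not expect a genuine obstacle in this lemma; the only point requiring any care is that imposing $j \neq i$ (respectively $j' \neq i$) still leaves an admissible choice, which is exactly where the hypothesis $n \geq 3$ enters, since at most one index is excluded on each side.
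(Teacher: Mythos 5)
Your proof is correct and uses essentially the same argument as the paper: both rest on the observation that $H_{n-1}=\{\id\}$ (resp.\ $H_2=\{\id\}$) makes $\sigma\mapsto\sigma(n-1)$ (resp.\ $\sigma\mapsto\sigma(2)$) injective on $H$, so at most one $j$ (resp.\ $j'$) is excluded, while $n\geq 3$ leaves at least two candidates. The paper merely phrases this as a proof by contradiction (two distinct forbidden values would force two distinct elements of $H$ to coincide), whereas you state it directly as a counting argument; the content is identical.
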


\begin{proof}
Suppose that $\{ x_ix_j\mid 1\leq j\leq n,\; j\neq i\}\subseteq A$.
Since $n\geq 3$, there exist $1\leq j,k\leq n$ such that $i,j,k$ are
three different integers. Because $x_ix_j,x_ix_k\in A$, there exist
$\sigma,\tau\in H$ such that $\sigma(n-1)=i=\tau(n-1)$,
$\sigma(n)=j$ and $\tau(n)=k$. As $H_{n-1}=\{\id\}$  and
$\sigma(n-1)=\tau(n-1)$, we have that $\sigma=\tau$.  But this
contradicts with   $\sigma(n)=j\neq k=\tau(n)$. Therefore there
exists $1\leq j\leq n$ such that $j\neq i$ and $x_ix_j\notin A$.

Similarly  one proves that there exists
$1\leq j'\leq n$ such that $j'\neq i$ and
$x_{j'}x_i\notin\tilde{A}$.
\end{proof}

 \begin{lemma}\label{AB}  Let $H$ be a transitive subgroup of $\Sym_n$, and
let $S=S_n(H)$. Then $\cup_{i=1}^n S{a_i}^2S$ is not a prime ideal
in $S$.
\end{lemma}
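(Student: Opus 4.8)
The plan is to refute primality by exhibiting a single witnessing element. Write $I=\bigcup_{i=1}^{n}Sa_i^2S$; this is a two-sided ideal of $S$, being a finite union of the ideals $Sa_i^2S$. Recall the element-wise criterion underlying the proofs of Lemma~\ref{nontrans} and Lemma~\ref{L3}: an ideal $P$ of $S$ is prime if and only if for all $u,v\in S\setminus P$ one has $uSv\not\subseteq P$. Hence, to show that $I$ is \emph{not} prime it suffices to produce $u,v\notin I$ with $uSv\subseteq I$. I claim that $u=v=z$ works, where $z=a_1\cdots a_n$.

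First I would check that $z\notin I$. Since all defining relations are length-preserving, every word in $\FM_n$ representing $z$ has length $n$; as the only length-$n$ factor of such a word is the whole word, a rewriting chain starting from $x_1\cdots x_n$ can only replace the entire word, so the congruence class of $z$ consists exactly of the permutation words $x_{\sigma(1)}\cdots x_{\sigma(n)}$ with $\sigma\in H$. None of these contains a repeated adjacent letter $x_ix_i$, and membership in $Sa_i^2S$ amounts precisely to the existence of a representative word containing the factor $x_ix_i$. Therefore $z\notin Sa_i^2S$ for every $i$, so $z\notin I$.

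The heart of the argument is to show $zSz\subseteq I$, and this is exactly where transitivity of $H$ enters. Fix $s\in S$ and write $zsz=\pi(x_1\cdots x_n\,w\,x_1\cdots x_n)$ for some $w\in\FM_n$ with $\pi(w)=s$. If $w=x_{k_1}\cdots x_{k_m}$ is nonempty, then by transitivity the orbit $\{\sigma(n)\mid\sigma\in H\}$ equals $\{1,\dots,n\}$, so I may pick $\sigma\in H$ with $\sigma(n)=k_1$ and rewrite the first factor $z$ as $x_{\sigma(1)}\cdots x_{\sigma(n)}$; this produces the square $x_{k_1}x_{k_1}$ at the junction, whence $zsz\in Sa_{k_1}^2S\subseteq I$. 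If $w$ is empty, the same device applied to $z^2$ (choosing $\sigma\in H$ with $\sigma(n)=1$) yields the factor $x_1x_1$, so $z^2\in I$. Thus $zSz\subseteq I$ while $z\notin I$, and consequently $I$ is not prime.

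I expect the only genuinely delicate point to be the justification that $z\notin I$, i.e.\ the claim that the congruence class of $z$ contains no word with a repeated adjacent letter; everything else is a direct consequence of transitivity. Should one prefer the ideal-theoretic formulation of primality, the same witness works: the principal ideal $SzS$ satisfies $(SzS)(SzS)\subseteq S(zSz)S\subseteq I$ yet $SzS\not\subseteq I$.
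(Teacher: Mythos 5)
Your proof is correct and follows essentially the same route as the paper: the paper also takes $z$ as the witness, notes $z\notin\bigcup_{i=1}^n Sa_i^2S$, and uses transitivity to get $za_i\in\bigcup_{i=1}^n Sa_i^2S$ for all $i$, hence $zSz$ is contained in the union while $z$ is not. The only difference is that you spell out the justification that $z$ avoids the ideal (via the rigidity of its congruence class), which the paper leaves as an unproved remark.
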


\begin{proof}  Let $Q=\cup_{i=1}^n S{a_i}^2S$.
Note that $z \notin Q$. However, since $H$ is transitive, we have
that $za_{i}\in Q$ for all $i$. Hence $zSz\subseteq Q$ and therefore
$Q$ is not prime.
\end{proof}

 Now we shall see another general result on prime ideals of
$S_n(H)$ for an arbitrary subset $H$ of $\Sym_n$.

\begin{theorem}\label{L4}
Let $H$ be a subset  of $\Sym_n$,  and let $K$ be a field. If $Q$ is
a prime ideal in $S_n(H)$ such that $S_n(H)zS_n(H)\subseteq Q$, then
$K[S_n(H)]/K[Q]$ is a prime monomial algebra. Furthermore, if $Q$ is
finitely generated then $K[S_n(H)]/K[Q]$ is either PI or primitive.
\end{theorem}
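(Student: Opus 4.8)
The plan is to recognise $A:=K[S_n(H)]/K[Q]$ as a monomial algebra and then extract primeness and the dichotomy from its combinatorics. Write $S=S_n(H)$ and set $I_Q=\pi^{-1}(Q)=\{w\in\FM_n:\pi(w)\in Q\}$; since $Q$ is an ideal of $S$, $I_Q$ is a two-sided ideal of the free monoid $\FM_n$, so its $K$-span $J$ is a monomial ideal of $K\langle x_1,\dots ,x_n\rangle$. First I would prove that the canonical surjection $\phi\colon K\langle x_1,\dots ,x_n\rangle\to K[S]\to A$ has kernel exactly $J$, which identifies $A\cong K\langle x_1,\dots ,x_n\rangle/J$ as a monomial algebra. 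The only mechanism by which $\phi$ can collapse two words is that distinct words with the same image in $S$ are connected by the defining relations; but every such relation rewrites a length-$n$ factor $x_{\sigma(1)}\cdots x_{\sigma(n)}$ of $\pi$-image $z$, so a word admitting any rewriting already has its image in $SzS\subseteq Q$. Hence $\pi$ is injective on the \emph{normal words} $\{w:\pi(w)\notin Q\}$, these biject with $S\setminus Q$, and a direct kernel computation gives $\ker\phi=J$. (Intuitively: because $z\in Q$ and each $a_{\sigma(1)}\cdots a_{\sigma(n)}$ equals $z$ in $S$, every defining binomial relation becomes $0=0$ in $A$, leaving only the monomial relations $I_Q$.)

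For primeness I would use the length grading. The relations preserve length, so $S$ is graded, $K[Q]=\bigoplus_d K[Q\cap S_d]$ is homogeneous, and $A$ is a graded algebra with homogeneous basis the normal words. Take nonzero $\alpha,\beta\in A$ and pass to their top-degree components $\alpha_p=\sum_i c_iu_i$ and $\beta_q=\sum_j d_jv_j$, where the $u_i$ share length $p$, the $v_j$ share length $q$, and all $c_i,d_j\neq 0$. For any word $w$ the products $u_iwv_j$ all have the same length and are pairwise distinct (equal-length prefixes force $u_i=u_{i'}$, then $v_j=v_{j'}$), so no cancellation is possible and $\alpha_p\,\bar w\,\beta_q\neq 0$ the moment a single $\pi(u_i)\pi(w)\pi(v_j)\notin Q$. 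Fixing one pair and setting $s=\pi(u_i),\ t=\pi(v_j)\in S\setminus Q$, primeness of the ideal $Q$ yields $w_0\in S$ with $sw_0t\notin Q$; lifting $w_0$ to a word $w$ makes $\alpha_p\bar w\beta_q$ the nonzero top component of $\alpha\bar w\beta$, so $\alpha A\beta\neq 0$ and $A$ is prime. Note this argument uses only that the relations are homogeneous and that $SzS\subseteq Q$, not that $H$ is abelian.

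Now suppose $Q=\bigcup_{i=1}^r Sg_iS$ is finitely generated. Because the relations preserve length, each fibre $\pi^{-1}(g)$ is finite, and this lets me bound the minimal generators of $I_Q$. If $w$ is such a minimal forbidden word and $w$ admits a rewriting, it contains a length-$n$ factor of image $z\in Q$, forcing $w$ to equal that factor; otherwise $w$ is normal, hence is the unique representative of $\pi(w)=sg_it$ and so equals a representative $\hat s\hat h\hat t$ with $\hat h\in\pi^{-1}(g_i)$, and minimality then forces $w=\hat h$ of length $\ell(g_i)$. In all cases $|w|\le\max(n,\max_i\ell(g_i))$, so there are only finitely many minimal forbidden words; thus $J$ is finitely generated and $A$ is a \emph{finitely presented} prime monomial algebra.

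It remains to apply the known dichotomy for finitely presented prime monomial algebras: such an algebra has rational Hilbert series and either polynomial or exponential growth, the polynomial case yielding a polynomial identity and the exponential case yielding primitivity. This last step is the \textbf{main obstacle}: parts one through three are self-contained combinatorics with the tools at hand, but the PI-or-primitive conclusion rests on the structure theory of finitely presented monomial algebras, and the work lies in verifying that $A$ meets its hypotheses and that no intermediate-growth behaviour can occur.
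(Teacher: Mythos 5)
Your proposal is correct, and its skeleton coincides with the paper's proof: identify $K[S_n(H)]/K[Q]$ as the monomial algebra $K\langle x_1,\dots,x_n\rangle/J$ with $J$ spanned by $\pi^{-1}(Q)$ (the key point in both being that every defining relation rewrites a factor of image $z\in SzS\subseteq Q$, so all binomial relations die modulo $K[Q]$), deduce primeness from primeness of $Q$, and invoke the Bell--Colak dichotomy for finitely presented prime monomial algebras. The differences are that you make two steps self-contained which the paper outsources or asserts. For primeness, the paper simply cites Proposition 24.2 of Okni\'nski's \emph{Semigroup Algebras}, whereas you give a direct argument via the length grading and top-degree components; your argument is sound (the surviving words $u_iwv_j$ are pairwise distinct basis elements, so one nonzero term suffices, and primeness of $Q$ in $S$ supplies the needed $w_0$), and it has the merit of showing explicitly that only homogeneity of the relations and $SzS\subseteq Q$ are used. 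For finite presentation, the paper asserts without proof that finite generation of the ideal $Q$ of $S$ yields a finitely presented monomial algebra; your bound $|w|\le\max\bigl(n,\max_i\ell(g_i)\bigr)$ on minimal forbidden words, split according to whether $w$ admits a rewriting, is a correct proof of this and fills a genuine gap in the paper's exposition.

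One remark on your final paragraph: the ``main obstacle'' you flag is smaller than you fear. Having established that $A$ is a prime, finitely presented monomial algebra, Theorem~1.2 of Bell and Colak (\emph{Primitivity of finitely presented monomial algebras}) applies verbatim --- its hypotheses are exactly these, and the exclusion of intermediate growth is internal to their proof, not something you must verify. The paper does precisely this citation and nothing more, so your proof is complete once that reference is put in place.
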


\begin{proof}
Let $S=S_n(H)$. Let $\mathrm{FM}_n= \langle x_1, \ldots , x_n
\rangle $ be the free monoid of rank $n$, and let $ \pi\colon
\mathrm{FM}_n \rightarrow S $ be the unique morphism such that $
\pi(x_i) = a_i $ for all $ i = 1, \ldots ,n $. Note that
$$\pi^{-1}(SzS)=\mathrm{FM}_n x_1 x_2\cdots x_n \mathrm{FM}_n
\cup \bigcup_{\sigma\in H}\mathrm{FM}_n x_{\sigma(1)}
x_{\sigma(2)}\cdots x_{\sigma(n)} \mathrm{FM}_n.$$  Thus
$\mathrm{FM}_n / \pi^{-1}(SzS) \cong S/SzS$ and $K[S]/K[SzS]$ is a
monomial algebra. Since $SzS\subseteq Q$, we have that $S/Q\cong
\mathrm{FM}_n/\pi^{-1}(Q)$. Hence $K[S]/K[Q]$  is a monomial
algebra. Since $Q$ is a prime ideal of $S$, by \cite[Proposition
24.2]{book}, $K[S]/K[Q]$ is prime.

Suppose that $Q$ is finitely generated. Then $K[S]/K[Q]$ is a
finitely presented monomial algebra, and by
\cite[Theorem~1.2]{BellColak} this algebra is either PI or
primitive.
\end{proof}

 We have seen in Lemma~\ref{L3} that, for an arbitrary abelian
subgroup $H$ of $\Sym_n$,  $S_n(H)zS_n(H)$ is a prime ideal of
$S_n(H)$,  for $n\geq 3$. The following result shows that it is not
minimal in general.

\begin{theorem}\label{T1}
Let $n\geq 3$. Let $H$ be either a non-transitive subgroup of
$\Sym_n$ or an abelian subgroup of $\Sym_n$, such that $(1,2,\dots
,n)\notin H$. Let $S=S_n(H)$ and let $r$ be a positive integer.
\begin{itemize}
\item[(i)] If
 $m_1,\dots ,m_r \geq 3$ and $1\leq i_1,\dots ,i_r\leq n$, then
$\cup_{j=1}^r S{a_{i_j}}^{m_j}S$ is a prime ideal in $S$.
\item[(ii)] For $m \geq 1$, $Sz^mS$ is a prime ideal in $S$.
\item[(iii)] If
 $m_1,\dots ,m_r \geq 3$, $1\leq i_1,\dots ,i_r\leq n$ and $m\geq 1$, then
$(Sz^mS)\; \cup \;\bigcup_{j=1}^r
S{a_{i_j}}^{m_j}S$ is a prime ideal in $S$.
\end{itemize}
\end{theorem}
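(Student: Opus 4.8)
The plan is to prove that each of these ideals $I$ is prime by the device already used for Lemmas~\ref{nontrans} and~\ref{L3}: a union of ideals is again an ideal, so it suffices to show that $S\setminus I$ is closed under sandwiching, i.e.\ that for every $u,v\in S\setminus I$ there is a connector $w$ with $uwv\notin I$. The elementary fact I would lean on throughout is that both sides of every defining relation contain each of $x_1,\dots,x_n$ exactly once; hence no length-$n$ word occurring in a relation has a repeated letter, and for $n\geq 3$ no single $z$-block can straddle a factor $x_cx_c$. Squares $a_c^2$ of generators will therefore play the role of barriers. The essential new difficulty, compared with Lemma~\ref{L3}, is that for $m=1$ the property ``the word contains a $z$-block'' is invariant under the relations (a rewritten block is again a block), whereas for $m\geq 2$ neither ``the word contains a factor equal to $z^m$'' nor ``the word contains $x_c^{m}$'' is relation-invariant: a straddling rewrite can destroy a run of $m$ consecutive blocks, and a single rewrite can merge separated occurrences of $x_c$ into a longer run. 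Consequently it will not be enough to inspect one representative of $uwv$; I must control all of them.

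For the connectors I would, as in Lemma~\ref{L3}, split into the two cases of the hypothesis. In the transitive case an abelian transitive group is semiregular, so $H_1=H_n=H_2=H_{n-1}=\{\id\}$; since moreover $(1,2,\dots,n)\notin H$, both Lemma~\ref{L2} and Lemma~\ref{AA} are available, whereas in the non-transitive case I would use letters outside the orbits of $1$ and of $n$, exactly as in Lemma~\ref{nontrans}. For part~(ii) I would take $w=a_i^2a_j^2$ with $a_i$ the last letter of $u$ and $a_j$ the first letter of $v$. For part~(i), since every $m_j\geq 3$, I would pick two distinct letters $c,d$, with $c$ different from the last letter of $u$ and $d$ from the first letter of $v$, so that the runs of length $2$ created by $a_c^2$ and $a_d^2$ neither reach $m_j$ nor lengthen an existing pattern run; Lemma~\ref{AA} lets me choose $c,d$ so that, in addition, no $z$-block can end or start at these barriers, which is what part~(iii) needs. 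Thus for part~(iii) a single connector $w=a_c^2a_d^2$ is tailored to block both the power patterns and $z^m$ at once, and in all cases the natural representative $UwV$ (with $\pi(U)=u$, $\pi(V)=v$) contains neither $x_{i_j}^{m_j}$ nor a visible $z^m$.

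The heart of the proof is then to show that \emph{no} word equivalent to $UwV$ exhibits the forbidden configuration. I would argue by contradiction along the lines of Theorem~\ref{T3}: assume $uwv\in I$, choose a shortest rewriting sequence $UwV=W_0,W_1,\dots,W_t$ ending in a word that displays the pattern, and examine how the pattern is assembled step by step. Lemma~\ref{L1} is exactly the tool for comparing two consecutive rewrites that both involve a $z$-block overlapping the relevant region, and Lemma~\ref{L2} supplies the endpoint facts ($ua_i\notin Sz$, $a_jv\notin zS$, and their analogues) that forbid a block from being anchored at the barrier. Feeding these into the minimal-length analysis should show that neither a run of $m$ consecutive $z$-blocks nor a run $x_{i_j}^{m_j}$ can be propagated across the $x_c^2$ and $x_d^2$ barriers, contradicting the minimality of $t$.

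The main obstacle is precisely this propagation control. A single rewrite is confined to one side of a doubled letter, but a cascade of rewrites anchored alternately on the two sides can erode the barrier pair by pair, so one must rule out that such a cascade ever carries the forbidden pattern across. I expect the bulk of the work to be a careful replaying of the minimal-sequence induction of Theorem~\ref{T3}, with two additional layers of bookkeeping: tracking the number of consecutive $z$-blocks for parts~(ii) and~(iii), and tracking the coalescence of single-letter runs for parts~(i) and~(iii). Keeping the barrier hypotheses coherent through this induction, and in the non-transitive case replacing the semiregularity input by the orbit conditions of Lemma~\ref{nontrans}, is where the real effort lies.
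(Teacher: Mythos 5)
Your skeleton is exactly the paper's: reduce primeness to exhibiting, for each pair $u,v\notin I$, a connector $w$ with $uwv\notin I$; split into the non-transitive case (letters outside the orbits of $1$ and $n$, as in Lemma~\ref{nontrans}) and the abelian transitive case (semiregular by Passman, so Lemmas~\ref{L2} and~\ref{AA} apply, and $(1,2,\dots,n)\notin H$ makes Lemma~\ref{L2} available); and use squared letters chosen via Lemma~\ref{AA} as barriers that no $z$-block can contain. The connectors you write down are the ones the paper uses. The one genuine divergence is how the step ``no representative of $uwv$ shows the forbidden pattern'' is discharged. You plan to replay the minimal-sequence induction of Theorem~\ref{T3}, with Lemma~\ref{L1} comparing consecutive rewrites; the paper's proof contains no such analysis, and rightly so. Lemma~\ref{L2} is a statement about \emph{elements} of $S$, hence about all representatives simultaneously: if in \emph{any} word equivalent to $Ux_cx_cx_dx_dV$ a block ended at the first $x_c$, then $ua_c\in Sz$, while $u\in Sa_k$ and $x_kx_c\notin A$ contradict Lemma~\ref{L2}(i); symmetrically no block can start at the last $x_d$, and no block can cross a square. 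So every single rewrite is trapped inside the $U$-part or the $V$-part, and a one-step induction on the number of rewrites gives that every representative of $uwv$ has the form $U'x_cx_cx_dx_dV'$ with $\pi(U')=u$, $\pi(V')=v$. The ``cascade erosion'' you fear can never start; Lemma~\ref{L1} and minimal sequences are the machinery that \emph{proves} Lemma~\ref{L2} and Theorem~\ref{T3}, and Theorem~\ref{T1} is meant to consume Lemma~\ref{L2} as a black box. Executing your plan would yield a correct conclusion for that step, but at the cost of duplicating, at length, work already encapsulated in Lemma~\ref{L2}.

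Where your instinct is better than your mechanism is the coalescence issue in parts (i) and (iii), which you flag but do not resolve: choosing $c$ ``different from the last letter of $u$'' is not well posed, because an element of $S$ can end in several letters ($z\in Sa_{\sigma(n)}$ for every $\sigma\in H$, hence in every letter when $H$ is transitive). Knowing $u\in Sa_k$, $c\neq k$ and $x_kx_c\notin A$ does not prevent some \emph{other} representative $U'$ of $u$ from ending in $x_c$, and then $U'x_cx_c$ contains $x_c^3$, so $uwv\in Sa_c^3S$ whenever $c=i_j$ with $m_j=3$. Concretely, if $H$ is transitive, $u=v=z$ and $Q=\bigcup_{i=1}^n Sa_i^3S$, then $z\notin Q$ but $za_c^2\in Q$ for every $c$, so \emph{every} connector beginning with a squared letter fails, and the shape of the connector itself must be adapted (for instance single letters $a_ca_d$ with $x_cx_d$ occurring inside no block, anchored at both ends by Lemma~\ref{L2}); in general one must choose the barrier letters avoiding the whole set of $c$ with $u\in Sa_c^{m_j-2}$ and $c=i_j$, and argue that a suitable connector exists. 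Your proposed bookkeeping gestures at this but supplies no mechanism for the choice; be aware that this is also precisely the point where the paper's own two-line appeal to Lemma~\ref{L2} is too terse, so a complete write-up requires an idea beyond what either text provides.
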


\begin{proof}

 We rely on the fact that $z$ never involves letters to a power
$\geq 2$.

$(i)$ Let $Q = \cup_{j=1}^r S{a_{i_j}}^{m_j}S$. Let $w_1, w_2 \in S
\setminus Q$. We shall see that $w_1Sw_2\not\subseteq Q$. We may
assume that $w_1\neq 1$ and $w_2\neq 1$.

{\em Case(A):} $H$ is not transitive. Thus there exists
$l$, with $1 \leq l < n$, such that $l$ is not in the
orbit of $n$, and there exists $l'$, with $1< l' \leq n$, such
that $l'$ is not in the orbit of $1$. In the event that
$l=l'$, let $l''$, with $1 \leq l'' \leq n$, be such
that $l'' \neq l=l'$.

If $w_1 \in S \setminus Sa_l$, let $w_1'= w_1 a_l^2$.
If $w_1 \in Sa_l \setminus Sa_l^2$, let
$w_1'=w_1 a_l$.
If $w_1 \in Sa_l^2$, let $w_1'=w_1$.
If $w_2 \in S \setminus a_{l'}S$, let $w_2'=a_{l'}^2w_2$.
If  $w_2 \in a_{l'}S \setminus a_{l'}^2S$, let
$w_2'=a_{l'}w_2$.
If  $w_2 \in a_{l'}^2S$, let $w_2'=w_2$.
In the event that $l \neq l'$, we have that
$w_1'w_2' \notin Q$, otherwise $w_1'a_{l''}w_2' \notin Q$.

{\em Case(B):} $H$ is an abelian subgroup of $\Sym_n$, such that
$(1,2,\dots ,n)\notin H$.

In this case, we may assume that $H$ is transitive.
By \cite[Proposition 3.2]{passman} we then have that $H$ is
semiregular. Thus $H_1=H_2=H_{n-1}=H_n=\{\mbox{id}\}$.

Suppose that $w_1\in Sa_k$ and $w_2\in a_lS$. By
Lemma~\ref{AA}, there exist $j,j'$ with $1\leq j,j'\leq n$
such that $j\neq k$, $j'\neq l$, $x_kx_j\notin A$
and $x_{j'}x_l\notin \tilde{A}$. If $j\neq j'$,
then, by Lemma~\ref{L2},
$w_1a_j^2a_{j'}^2w_2\notin Q$. If $j=j'$, then by
Lemma~\ref{L2}, $w_1a_ja_{j'}w_2\notin Q$.

$(ii)$  Let $w_1, w_2 \in S \setminus Sz^mS$. We shall see that
$w_1Sw_2\not\subseteq Sz^mS$.

{\em Case(A):} $H$ is not transitive. Let $l,l'$ be as in the
proof of $(i)$. Then $w_1a_l^2a_{l'}^2w_2 \notin Sz^mS$.

{\em Case(B):} $H$ is an abelian subgroup of $\Sym_n$, such that
$(1,2,\dots ,n)\notin H$. This is proved similarly as $(i)$.

$(iii)$  Let $Q = \cup_{j=1}^r S{a_{i_j}}^{m_j}S$. Let $w_1, w_2 \in
S \setminus (Sz^mS\cup Q)$. By an argument similar to the one
used in the proof of $(i)$, one can prove that $w_1Sw_2\not\subseteq
Sz^mS\cup Q$.
\end{proof}

\begin{corollary}
Let $n\geq 3$. Let $H$ be either a non-transitive subgroup of
$\Sym_n$ or an abelian subgroup of $\Sym_n$, such that $(1,2,\dots
,n)\notin H$. Let $S=S_n(H)$ and let $K$ be a field. For $m_1,\dots
,m_r \geq 3$ and $1\leq i_1,\dots ,i_r\leq n$, let $Q=\cup_{j=1}^r
S{a_{i_j}}^{m_j}S$. Then $K[S]/K[SzS\cup Q]$ is a primitive monomial
algebra.
\end{corollary}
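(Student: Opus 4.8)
The plan is to realize $K[S]/K[SzS\cup Q]$ as a quotient covered by Theorem~\ref{L4} and then to rule out the PI alternative by exhibiting exponential growth. First I would set $Q'=SzS\cup Q$, where $Q=\bigcup_{j=1}^{r}S a_{i_j}^{m_j}S$. By Theorem~\ref{T1}(iii) applied with $m=1$, the set $Q'$ is a prime ideal of $S$, and plainly $SzS\subseteq Q'$. Moreover $Q'$ is generated, as an ideal of $S$, by the finite set $\{z,a_{i_1}^{m_1},\dots,a_{i_r}^{m_r}\}$, so it is finitely generated. Hence Theorem~\ref{L4} applies and shows that $K[S]/K[Q']$ is a prime monomial algebra that is either PI or primitive. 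The whole problem thus reduces to proving that this algebra is not PI.

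For that, I would use the monomial basis made explicit in the proof of Theorem~\ref{L4}: the algebra $K[S]/K[Q']$ has a $K$-basis consisting of the images of those words $w\in\FM_n$ that contain no factor of the form $x_1 x_2\cdots x_n$ or $x_{\sigma(1)}\cdots x_{\sigma(n)}$ with $\sigma\in H$, and no factor $x_{i_j}^{m_j}$; call such words \emph{allowed}. Distinct allowed words give distinct basis elements, and since all the defining relations and all the chosen generators of $Q'$ are homogeneous, this basis is compatible with the grading by word length.

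The key construction is then a two-letter family of allowed words whose count is exponential. Since $n\ge 3$, I would fix two distinct letters, say $x_1,x_2$, and put $p=x_1x_2$ and $q=x_2x_1$. Every word lying in the submonoid of $\FM_n$ generated by $\{p,q\}$ uses only the letters $x_1,x_2$ and has every maximal single-letter run of length at most $2$; such a word can contain no permutation factor (a permutation word uses all $n\ge 3$ distinct letters) and no factor $x_{i_j}^{m_j}$ (because $m_j\ge 3$ exceeds every run length, while letters $x_{i_j}$ with $i_j\notin\{1,2\}$ do not occur at all). Thus every word in $\langle p,q\rangle$ is allowed. As $\{p,q\}$ is a prefix code (the two words begin with different letters), the $2^{k}$ products of $k$ factors chosen from $\{p,q\}$ are pairwise distinct allowed words of length $2k$, so the dimension of the degree-$2k$ component of $K[S]/K[Q']$ is at least $2^{k}$. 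Hence the algebra has exponential growth. Since a finitely generated PI algebra has finite Gelfand--Kirillov dimension, and thus polynomially bounded growth, $K[S]/K[Q']$ cannot be PI; by the dichotomy in Theorem~\ref{L4} it is therefore primitive.

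The routine part here is simply the assembly of Theorems~\ref{T1} and~\ref{L4}, together with the observation that $Q'$ is prime, contains $SzS$, and is finitely generated. The one step that needs an idea is excluding the PI case, and I expect this to be the main obstacle: it is settled by the binary family $\{x_1x_2,\,x_2x_1\}$ of allowed words, which forces exponential growth, the only external input being the standard fact that finitely generated PI algebras have finite Gelfand--Kirillov dimension.
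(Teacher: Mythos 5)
Your proof is correct, and its first half coincides with the paper's own argument: Theorem~\ref{T1}(iii) with $m=1$ gives primality of $SzS\cup Q$, finite generation of this ideal (by $z,a_{i_1}^{m_1},\dots ,a_{i_r}^{m_r}$) is clear, and Theorem~\ref{L4} together with the Bell--Colak dichotomy reduces everything to showing that the algebra is not PI. It is in this last step that you diverge. The paper exhibits the submonoid $\langle a_1a_2,a_1a_3\rangle$ of $S$, asserts that it is free of rank two and disjoint from $SzS\cup Q$, and concludes that the quotient contains a free algebra of rank two, hence is not PI. You instead use the free monoid generated by $x_1x_2$ and $x_2x_1$, observe that all $2^k$ of its words of length $2k$ are ``allowed'' (no factor $x_{\sigma(1)}\cdots x_{\sigma(n)}$ with $\sigma\in H$, no factor $x_{i_j}^{m_j}$), and deduce exponential growth, which is incompatible with PI by Berele's theorem. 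Your identification of the allowed words as a $K$-basis is legitimate: a word with no permutation factor is alone in its congruence class, so $\pi^{-1}(SzS\cup Q)$ is exactly the set of words containing a forbidden factor, as in the proof of Theorem~\ref{L4}. Two points of comparison. First, your witness monoid is actually more robust than the paper's: your words use only the letters $x_1,x_2$, so for $n\geq 3$ no permutation factor can occur no matter what $H$ is; the paper's words use three letters, and when $n=3$ and $(1,2)\in H$ (for instance the admissible non-transitive group $H=\{\id ,(1,2)\}$) one has $(a_1a_2)(a_1a_3)=a_1(a_2a_1a_3)=a_1z\in SzS$, so the paper's disjointness claim $\langle a_1a_2,a_1a_3\rangle\cap (SzS\cup Q)=\emptyset$ fails in that edge case --- your argument quietly repairs this. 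Second, your appeal to Berele's theorem is heavier machinery than needed: the image of $\langle x_1x_2,x_2x_1\rangle$ consists of distinct basis words and is multiplicatively closed, so it already spans a free subalgebra of rank two of $K[S]/K[SzS\cup Q]$, which rules out PI directly and keeps the proof in the same elementary spirit as the paper's.
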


\begin{proof}
By Theorem~\ref{T1}, $SzS\cup Q$ is a prime ideal
of $S$. Hence by Theorem~\ref{L4},
$K[S]/K[SzS\cup Q]$ is a finitely presented and
prime monomial algebra, and by
\cite[Theorem~1.2]{BellColak}, it is either PI or
primitive. Note that the submonoid $\langle
a_1a_2,a_1a_3\rangle$ of $S$ is a free monoid of
rank two and $$\langle a_1a_2,a_1a_3\rangle\cap
(SzS\cup Q)=\emptyset.$$ Hence $K[S]/K[SzS\cup
Q]$ is not PI. Therefore $K[S]/K[SzS\cup Q]$ is
primitive.
\end{proof}

Although it is well-known that in a commutative semigroup the union
of prime ideals is prime, this is not true for noncommutative
semigroups. Thus part $(iii)$ of Theorem~\ref{T1} is not a trivial
consequence of parts $(i)$ and $(ii)$. In fact we have the following
result.

\begin{proposition}
Let $H$ be a transitive abelian subgroup of $\Sym_n$, with $n>2$,
such that $(1,2,...,n)\notin H$. Let $S=S_n(H)$. Then there exist
prime ideals $P,Q$ in $S$ such that $P\cup Q$ is not a prime ideal
in $S$.
\end{proposition}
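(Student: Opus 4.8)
The plan is to realise the non-prime ideal $\bigcup_{i=1}^{n}Sa_i^2S$ of Lemma~\ref{AB} as a union of two prime ideals. First I would record the standing structure. By \cite[Proposition 3.2]{passman} the transitive abelian group $H$ is semiregular, hence acts regularly; thus $|H|=n$ and $H_1=H_2=H_{n-1}=H_n=\{\id\}$, so Lemmas~\ref{L2} and~\ref{AA} apply. Since the word $z=x_1\cdots x_n$ is multilinear and the defining relations preserve the multiset of letters of a word, no element of $Sa_i^2S$ can equal $z$; hence $z\notin\bigcup_{i=1}^{n}Sa_i^2S$. I would also note that the hypotheses force $n\ge 4$: for $n=3$ the only transitive abelian subgroup of $\Sym_3$ is $\langle(1,2,3)\rangle$, which is excluded, so the statement is vacuous in that case.

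Next, fix a partition $\{1,\dots,n\}=I\sqcup J$ with $|I|\ge 2$ and $|J|\ge 2$ (possible since $n\ge 4$) and set $P=\bigcup_{i\in I}Sa_i^2S$ and $Q=\bigcup_{j\in J}Sa_j^2S$. These are two-sided ideals with $P\cup Q=\bigcup_{i=1}^{n}Sa_i^2S$, which is not prime by Lemma~\ref{AB}. Everything therefore reduces to showing that $P$ (and, symmetrically, $Q$) is prime.

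For primality I would imitate Case~(B) of the proof of Theorem~\ref{T1}(i), replacing the cube-insertion trick by a barrier built from squares of indices lying in $J$. Given $w_1,w_2\in S\setminus P$, I reduce to $w_1\in Sa_k\cup\{1\}$ and $w_2\in a_lS\cup\{1\}$ and seek $s$ with $w_1sw_2\notin P$. The key counting point, which uses regularity of $H$, is that for each $k$ there is exactly one index $c$ with $x_kx_c\in A$ (the unique $\sigma\in H$ with $\sigma(n-1)=k$ gives $c=\sigma(n)$), and for each $l$ exactly one $c'$ with $x_{c'}x_l\in\tilde A$. Since $|J|\ge 2$, I may choose $c,c'\in J$ with $x_kx_c\notin A$ and $x_{c'}x_l\notin\tilde A$ (note $x_kx_k\notin A$ always, so $c=k$ is even admissible when $k\in J$). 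I then take $s=a_c^2a_{c'}^2$, whose letters lie in $J$, so that the barrier itself contributes no square $a_i^2$ with $i\in I$.

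It remains to verify that no chain of defining relations applied to $w_1a_c^2a_{c'}^2w_2$ can produce a subword $a_i^2$ with $i\in I$; this is the heart of the argument and the step I expect to be the main obstacle. Because a $z$-relation involves each letter exactly once, no $z$-block can straddle either square $a_c^2$ or $a_{c'}^2$, so every reduction is confined to $w_1a_c$, to $a_{c'}w_2$, or to the interiors of $w_1,w_2$. The choices $x_kx_c\notin A$ and $x_{c'}x_l\notin\tilde A$ are precisely the hypotheses Lemma~\ref{L2} needs to guarantee $\pi(w_1a_c)\notin Sz$ and $\pi(a_{c'}w_2)\notin zS$, so the junctions cannot complete a $z$-block and hence cannot rearrange the letters of $w_1$ or $w_2$ into a new square; as $w_1,w_2\notin P$ to begin with, the product stays outside $P$. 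Granting this control of the reduction chains, $P$ and $Q$ are prime while $P\cup Q$ is not, which proves the proposition.
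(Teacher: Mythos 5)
Your proof is correct, but it is organized differently from the paper's. The paper takes the $n$ singleton ideals $P_i=Sa_i^2S$, proves each one is prime (via Lemma~\ref{AA} to get $j,j'$ with $x_kx_j\notin A$, $x_{j'}x_l\notin\tilde{A}$, followed by a four-way case analysis on whether $j$ or $j'$ equals $i$, inserting $a_j^2a_{j'}^2$, $a_j^2a_l^2$, $a_k^2a_{j'}^2$ or $a_k^2a_l^2$ as a barrier), and then invokes Lemma~\ref{AB} to conclude that the union of all $n$ of them is not prime; the literal two-prime statement then follows from the easy (and unstated) observation that if every union of two primes were prime, induction would make every finite union of primes prime. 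You instead partition $\{1,\dots,n\}$ into two blocks $I\sqcup J$ of size at least two and prove that each block-union $\bigcup_{i\in I}Sa_i^2S$ is prime, which produces the pair $P,Q$ explicitly and bypasses the induction step; your remark that the hypotheses force $n\geq 4$ (the statement being vacuous for $n=3$) is correct and is exactly what makes the partition possible. The price is a slightly more general primality claim, which you support adequately: your regularity counting (each $k$ has exactly one partner $c$ with $x_kx_c\in A$, each $l$ exactly one $c'$ with $x_{c'}x_l\in\tilde{A}$) together with $|J|\geq 2$ replaces Lemma~\ref{AA} and eliminates the paper's case analysis, and your junction control — a $z$-block has distinct letters so it cannot straddle a square, while Lemma~\ref{L2} forbids a block ending at the first barrier letter or beginning at the last — is precisely the mechanism the paper's own proof relies on. Both arguments thus rest on the same three pillars (Lemma~\ref{AB}, regularity via \cite[Proposition 3.2]{passman}, and Lemma~\ref{L2}); the difference is the decomposition, and yours buys a more constructive match with the statement plus the extra fact that $\bigcup_{i\in I}Sa_i^2S$ is prime whenever the complement of $I$ contains at least two indices.
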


\begin{proof}
Let $P_i=Sa_i^2S$, for $1\leq i\leq n$. By Lemma~\ref{AB},
$\cup_{i=1}^{n} P_i$ is not a prime ideal in S. We shall prove that
each $P_i$ is a prime ideal in $S$.

Let $w_1,w_2\in S\setminus P_i$. We shall see that
$w_1Sw_2\not\subseteq P_i$. We may assume that $w_1\neq 1$ and
$w_2\neq 1$. Suppose that $w_1\in Sa_k$ and $w_2\in a_lS$. Since $H$
is transitive and abelian, by \cite[Proposition 3.2]{passman}, $H$
is semiregular. Thus by Lemma ~\ref{AA}, there exist $j,j'$, with
$1\leq j,j' \leq n$, sucht that $k\neq j$, $l\neq j'$, $x_kx_j\notin
A$ and $x_{j'} x_l\notin \tilde{A}$. If $j\neq i$ and $j'\neq i$,
then $w_1a_j^2a_{j'}^2w_2\notin P_i$, by Lemma~\ref{L2}. If $j\neq
i$ and $j'=i$, then $l\neq i$ and $w_1a_j^2a_l^2w_2\notin P_i$, by
Lemma~\ref{L2}. If $j= i$ and $j'\neq i$, then $k\neq i$ and
$w_1a_k^2a_{j'}^2w_2\notin P_i$, by Lemma~\ref{L2}. If $j= i=j'$,
then $k\neq i$, $l\neq i$ and $w_1a_k^2a_l^2w_2\notin P_i$, by
Lemma~\ref{L2}. Therefore the result follows.
\end{proof}

We finish with handling the Jacobson radical of $K[S_{n}(H)$
for $H$ abelian.

In \cite[Corollary 2.2]{altalgebra} it is proved
that if $H$ is an arbitrary subgroup of $\Sym_n$
and the Jacobson radical $J(K[S_n(H)])\neq \{
0\}$, then $H$ is a transitive subgroup of
$\Sym_n$.

In \cite{alghomshort} it is proved that if
$H=\langle (1,2,\dots ,n)\rangle$ then
$J(K[S_n(H)])=\{ 0\}$. Now we generalize this
result for any abelian subgroup $H$ of $\Sym_n$.

Recall that if $\alpha =\sum_{s\in S_{n}(H)} k_{s}s$, with $k_{s}\in
K$, then by $\supp (\alpha )$ one denotes the support of $\alpha$.
That is, $\supp (\alpha )=\{ s\in S \mid k_{s}\neq 0\}$.

\begin{theorem}\label{T2}  If  $H$ is an abelian subgroup of $\Sym_n$
then $J(K[S_n(H)])=\{ 0\}$.
\end{theorem}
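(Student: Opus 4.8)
The plan is to reduce the problem to two facts already available in the paper: first, that $J(K[S_n(H)])\neq\{0\}$ forces $H$ to be transitive (this is the cited result from \cite[Corollary 2.2]{altalgebra}); and second, the structure theory developed above for the transitive abelian case. So I may immediately assume $H$ is a transitive abelian subgroup of $\Sym_n$. By \cite[Proposition 3.2]{passman}, such an $H$ is semiregular, hence $|H|=n$ and all point stabilizers are trivial; in particular $H_1=H_n=\{\id\}$, so by Theorem~\ref{T3} the monoid $S=S_n(H)$ is cancellative. The dichotomy that drives the argument is whether $(1,2,\dots,n)\in H$ or not.

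\emph{The easy case} is when $(1,2,\dots,n)\in H$. Then, as shown in the proof of Theorem~\ref{T3}, transitivity together with $H_1=\{\id\}$ forces $H=\langle(1,2,\dots,n)\rangle$, and the desired conclusion $J(K[S_n(H)])=\{0\}$ is exactly the known result from \cite{alghomshort}. So the entire content of the theorem lies in the remaining case $(1,2,\dots,n)\notin H$.

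\emph{The main case} is $H$ transitive abelian with $(1,2,\dots,n)\notin H$. Here I would exploit the prime ideals constructed in Theorem~\ref{T1} and the algebra-level consequences in Theorem~\ref{L4} and its Corollary. The strategy is to show that the intersection of the annihilators (or of a suitable family of primitive/prime ideals) coming from the ideals $SzS\cup Q$ is zero on $K[S]$, so that $K[S]$ embeds into a product of semiprimitive images, forcing $J(K[S])=\{0\}$. Concretely, for each choice of exponents $m_1,\dots,m_r\geq 3$ and indices $i_1,\dots,i_r$, the ideal $SzS\cup\bigcup_j S a_{i_j}^{m_j}S$ is prime (Theorem~\ref{T1}(iii) with the $z$-part adjoined), the quotient $K[S]/K[SzS\cup Q]$ is a finitely presented prime monomial algebra, and by \cite[Theorem~1.2]{BellColak} it is primitive (the Corollary shows it is not PI). Since a primitive algebra is in particular semiprimitive, each such ideal $K[SzS\cup Q]$ contains no nonzero element of $J(K[S])$ beyond what maps into its own radical; thus $J(K[S])$ is contained in every such $K[SzS\cup Q]$.

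\emph{The crux} is therefore to verify that $\bigcap K[SzS\cup Q]=\{0\}$, the intersection taken over the relevant family of monomial ideals. I expect the main obstacle to be a combinatorial separation argument: given a nonzero $\alpha\in K[S]$, one must produce monomial data (a power $z^m$ and finitely many powers $a_{i_j}^{m_j}$) whose associated ideal $SzS\cup Q$ avoids $\supp(\alpha)$, so that the image of $\alpha$ is nonzero in the corresponding primitive quotient. This amounts to showing that the images of the free monomial submonoids (such as $\langle a_1a_2,a_1a_3\rangle$, already used in the Corollary to defeat the PI alternative) detect every nonzero element of $K[S]$. The key technical inputs are Lemma~\ref{L2}, which controls exactly when a product lands in $Sz$ or $zS$, and the fact that $z$ never involves a letter to a power $\geq 2$; together these let me keep $\supp(\alpha)$ outside the monomial ideal while still landing in a primitive quotient. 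Once the separation is achieved, semiprimitivity of each primitive quotient gives $J(K[S])\subseteq\bigcap K[SzS\cup Q]=\{0\}$, completing the proof.
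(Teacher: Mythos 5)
Your opening reduction (to $H$ transitive abelian, hence semiregular by \cite[Proposition 3.2]{passman}, with the case $(1,2,\dots,n)\in H$ disposed of by $H=\langle(1,2,\dots,n)\rangle$ and \cite{alghomshort}) is correct and agrees with the paper. But the main case breaks down at exactly the step you call the crux, and it breaks for a structural reason, not a technical one: every ideal in your separating family has the form $SzS\cup Q$, so every one of them contains $SzS$, and therefore
$$\bigcap K[SzS\cup Q]\supseteq K[SzS]\ni z\neq 0.$$
The intersection can never be $\{0\}$; no choice of monomial data can make the image of an element supported inside $SzS$ nonzero in one of your primitive quotients. You cannot escape this by replacing $SzS$ with $Sz^mS$, $m\geq 2$ (as your mention of ``a power $z^m$'' suggests): Theorem~\ref{T1}(iii) does make $Sz^mS\cup Q$ a prime ideal of $S$, but Theorem~\ref{L4} -- the only source of semiprimitivity of the quotients -- requires $SzS\subseteq Q$, precisely because killing all of $SzS$ is what turns $K[S]/K[Q]$ into a monomial algebra to which \cite[Theorem~1.2]{BellColak} applies; for $m\geq 2$ the defining relations of $S$ survive in the quotient and nothing in the paper makes $K[S]/K[Sz^mS\cup Q]$ semiprimitive. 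So your argument proves at most $J(K[S])\subseteq K[SzS]$, and the genuine difficulty -- showing that no nonzero element of the radical can be supported in $SzS$ -- is left untouched.

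That remaining difficulty is what the paper's proof actually addresses, by a different mechanism. It first quotes \cite[Proposition 2.6]{altalgebra} to get the sharper a priori bound $J(K[S])\subseteq K[Sz\cup zS]$ (your ideal-theoretic argument cannot reproduce even the weaker bound without this input, since you would still need \cite[Corollary 2.2]{altalgebra} plus everything above). Then, for $0\neq\alpha\in J(K[S])$ and $w\in\supp(\alpha)$ with $w\in a_iS\cap Sa_j$, Lemma~\ref{L2} shows $a_iwa_j\notin Sz\cup zS$, because $a_iwa_j$ ends in $a_ja_j$ and begins with $a_ia_i$, while the sets $A$ and $\tilde A$ consist of products of two distinct letters. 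Since $a_i\alpha a_j\in J(K[S])\subseteq K[Sz\cup zS]$, the monomial $a_iwa_j$ must cancel against $a_iw'a_j$ for some $w'\in\supp(\alpha)$ with $w'\neq w$; cancellativity of $S$ (Theorem~\ref{T3}) then forces $w=w'$, a contradiction. If you want to salvage your plan, you need a substitute for these two ingredients -- an a priori confinement of $\supp(J(K[S]))$ and a device that works inside $K[Sz\cup zS]$ -- and that is exactly what a family of ideals all containing $z$ cannot supply.
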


\begin{proof}
Suppose $H$ is an abelian subgroup of $\Sym_n$. Let $S=S_{n}(H)$.
Note that for $n\leq 2$, $K[S]$ is either a polynomial algebra over
$K$ or a free algebra over $K$. Thus we may assume that $n\geq 3$.

We prove the result by contradiction. So, assume $0\neq \alpha
=\sum_{s\in S} k_{s}s\in J(K[S_n(H)])$, with each $k_{s}\in K$.
Hence, by the comments before the Theorem, $H\neq \langle
(1,2,\ldots , n)\rangle$ and $H$ is a transitive abelian subgroup of
$\Sym_{n}$. Thus, as mentioned before (\cite[Proposition
3.2]{passman}), $H$ is semiregular. Therefore $n=|\{\sigma (1)\mid
\sigma\in H\}|=|H|/|H_1|=|H|$. So, $(1,2,\dots ,n)\notin H$. Now,
 since $n\geq3$, from  \cite[Proposition 2.6]{altalgebra}, we
know that $J(K[S]) \subseteq [Sz \cup zS]$.  Let $w \in \supp
(\alpha)$. Then $w \in {Sz \cup zS}$ and  $w\in a_iS\cap Sa_j$, for
some $i,j$. By Lemma~\ref{L2}, $a_iwa_j \notin {Sz \cup zS}$. Since
$a_i \alpha a_j \in J(K[S])$, there exists $w'\in \supp (\alpha)$
such that $w\neq w'$ and $a_iwa_j=a_iw'a_j$. However,  from
Theorem~\ref{T3} we know that $S$ is cancellative, and thus $w=w'$,
a contradiction.
\end{proof}

\vspace{30pt}
 \noindent \begin{tabular}{llllllll}
 F. Ced\'o && E. Jespers  \\
 Departament de Matem\`atiques &&  Department of Mathematics \\
 Universitat Aut\`onoma de Barcelona &&  Vrije Universiteit Brussel  \\
08193 Bellaterra (Barcelona), Spain    && Pleinlaan
2, 1050 Brussel, Belgium \\
 cedo@mat.uab.cat && efjesper@vub.ac.be\\
   &&   \\
G. Klein &&  \\ Department of Mathematics &&
\\  Vrije Universiteit Brussel && \\
Pleinlaan 2, 1050 Brussel, Belgium &&\\ gklein@vub.ac.be&&
\end{tabular}
\end{document}